\definecolor{labelkey}{gray}{.8}
\definecolor{refkey}{gray}{.8}
\definecolor{darkblue}{rgb}{0,0,0.7} 
\definecolor{darkred}{rgb}{0.9,0.1,0.1}
\definecolor{darkgreen}{rgb}{0,0.5,0}
\renewenvironment{proof}[1][\smallskip\noindent\proofname]{{\smallskip\noindent\bfseries #1. }}{\qed \medskip}
\newtheorem{thm}{Theorem}[section]
\newtheorem{prop}[thm]{Proposition}
\newtheorem{lem}[thm]{Lemma}
\theoremstyle{definition}
\newtheorem{rem}[thm]{Remark}
\newcommand\norm[1]{\left\lVert#1\right\rVert}
\newcommand\bra[1]{\left({#1}\right)}
\newcommand\abs[1]{\left\lvert#1\right\rvert}
\renewcommand{\le}{\leqslant}
\renewcommand{\ge}{\geqslant}
\renewcommand{\leq}{\leqslant}
\renewcommand{\geq}{\geqslant}
\newcommand{\ls}{\lesssim}
\newcommand{\E}{\mathbb{E}}
\newcommand{\J}{\mathcal{J}}
\newcommand{\mP}{\mathcal{P}}
\newcommand{\N}{\mathbb{N}}
\newcommand{\1}{\mathbf{1}}
\newcommand{\R}{\mathbb{R}}
\renewcommand{\P}{\mathbb{P}}
\newcommand{\eps}{\varepsilon}
\newcommand{\dd}{\, \mathrm{d}}
\newcommand{\W}{\mathcal{W}}
\newcommand{\dmin}{d_{\mathrm{min}}}
\newcommand{\dminone}{d_{\mathrm{min,1}}}
\renewcommand{\varrho}{\rho}
\DeclareMathOperator*{\esssup}{esssup}
\DeclareMathOperator{\Id}{Id}
\DeclareMathOperator{\dv}{div}
\numberwithin{equation}{section}
\begin{document}

%%%%%%%%%%%%%%%%%%%% Arxiv %%%%%%%%%%%%%%%%

\title{\LARGE Propagation of Chaos for First-Order Mean-Field Systems with Non-Attractive Moderately Singular Interaction}
\author[1]{Richard M. H\"ofer\thanks{richard.hoefer@ur.de}}
\author[2]{Richard Schubert\thanks{schubert@iam.uni-bonn.de}}
\affil[1]{Faculty of Mathematics, University of Regensburg, Germany}
\affil[2]{Institute for Applied Mathematics, University of Bonn, Germany}

%%%%%%%%%%%%%%%%%%%%%%%%%%%%%%%%%%%%

\maketitle

\begin{abstract}
    We consider particle systems that evolve by inertialess binary interaction through general non-attractive kernels  of singularity $|x|^{-\alpha}$ with $\alpha<d-1$. We prove a quantitative mean-field limit in terms of Wasserstein distances under certain conditions on the initial configuration while maintaining control of the particle configuration in the form of the minimal distance and certain singular sums of the particle distances. As a corollary, we show propagation of chaos for $\alpha<\frac{d-1}{2}$ for $d\ge 3$ and $\alpha<\frac 13=\frac{2d-3}{3}$ for $d=2$. This  extends the results of Hauray \cite{Hauray09}, which  yield propagation of chaos for $\alpha < \frac{d-2}{2}$ without an assumption on the sign of the interaction.   
    The main novel ingredient is that due to the non-attraction property it is enough to control the distance to the next-to-nearest neighbour particle.
\end{abstract}

\section{Introduction}

We consider the classical first-order $N$ particle mean-field dynamics in $\R^d$ given by
\begin{align}\label{eq:micro}
    \left\{\begin{array}{rl} \displaystyle\frac{\dd}{\dd t} X_i &= \displaystyle\frac 1 N \displaystyle \sum_{j \neq i}K(X_i -X_j), \\
    X_i(0) &= X_i^0,\end{array}\right.
\end{align}
for some interaction kernel $K$ and given initial positions $X_i^0\in \R^d, i=1,\dots,N$. A typical question in this context is the following: Denote the empirical measure $\rho_N (t) = \frac 1 N \sum_i \delta_{X_i(t)}$, and assume that $\rho_N(0)\rightharpoonup \rho^0$ in an appropriate sense, is it true that $\rho_N(t)\rightharpoonup \rho(t)$ for $t>0$, and if so, which PDE does $\rho$ satisfy? This PDE is called the mean-field limit of \eqref{eq:micro}.

The expected mean-field limit for \eqref{eq:micro} is given by the equation
\begin{align}\label{eq:macro}
\left\{\begin{array}{rl}\partial_t \rho+\dv((K\ast \rho)\rho)&=0,\\
\rho(0)&=\rho^0.
\end{array}\right.
\end{align}
{Indeed, the empirical measure $\rho_N$ is already a distributional solution to this PDE (with the convention $K(0) = 0$).}
A typical assumption on the initial particle positions is that they are \emph{chaotic}, i.e. independently and identically distributed (i.i.d.)  with law $\rho^0$. The notion of \emph{propagation of chaos} is then the convergence to the mean-field limit \eqref{eq:macro}  with probability $1$ asymptotically as $N \to \infty$. We refer to \cite{JabinReview, ChaintronDiezReviewA} for review articles on this topic.\\

For Lipschitz interaction kernels $K$, Dobroushin \cite{Dobroushin79} showed a stability result for distributional solutions to \eqref{eq:macro} in terms of the $1$-Wasserstein distance, which immediately implies propagation of chaos for such kernels.
Usually the main obstacle in deriving a mean-field limit for \eqref{eq:micro} arises from the singularity of the kernel at the origin $|K(x)|\sim |x|^{-\alpha}$. There are numerous results that deal with this issue by introducing an $N$-dependent truncation of this singularity. For conciseness, we only mention here some  of the results that consider first-order mean-field limits with the non-truncated singular kernel. \\

Work on this topic started with the so-called point-vortex system in $d=2$ where the specific kernel is $K(x)=-x/|x|^2$ and the mean-field limit is Euler's equation. The convergence was obtained in \cite{Sch96}. Motivated by the point-vortex system, but considering general singular kernels, Hauray's original method \cite{Hauray09} builds on Dobroushins approach, but  maintainins simultaneous control on the $\infty$-Wasserstein distance between the empirical density to the limit measure and the minimal inter-particle distance (which implicitly depends on $N$)
\begin{align}\label{eq:dmin}
     \dmin(t) = \min_i \min_{j \neq i} |X_i(t) - X_j(t)|.
\end{align}
This allows to show mean-field limits for $\alpha<d-1$ in any dimension $d$ under certain assumptions on the initial Wasserstein distance and the initial minimal inter-particle distance.  Moreover, Hauray's result  implies \emph{propagation of chaos} for $\alpha < \frac {d-2}2$ because aforementioned assumptions are satisfied for such $\alpha$ with overwhelming probability for i.i.d. initial particle positions as $N \to \infty$.
% Second-order systems are considered in \cite{JW16} for bounded kernels and in \cite{JW18} for kernels in $W^{-1,\infty}$. 

Through a modulated energy argument, Serfaty showed in the seminal work \cite{Ser20} the propagation of chaos for very singular interaction kernels in the special repulsive form $K(x)=x/|x|^{\alpha+1}$ for $d-1 \leq \alpha<d+1$. Notably, this includes the Coulomb interaction kernel.
Based on this method and relative entropy methods from \cite{JW16, JW18}, this result has been improved to include more general (repulsive, gradient-type) interaction kernels, less singular attractive potentials and  added diffusive noise,  see for instance \cite{NguyenRosenzweigSerfaty22, BJW19, DeCourcelRosenzeigSerfaty23, BreschJabinWang23}
% This was later combined with methods from \cite{JW18} in \cite{BJW19} to treat larger classes of kernels essentially adding smooth and less singular attractive parts to the repulsive interaction. 

Very recently, by a novel duality approach, \cite{BDJ24} obtained propagation of chaos for interaction kernels with $\alpha<\frac{d+2}2$,  under the symmetry condition $K(x)=-K(-x)$.\\
%This last result gives a purely qualitative convergence, whereas all the other aforementioned results provide convergence rates.\\

A particular benefit of Hauray's method is that it propagates control of the minimal particle distance. This makes the method sufficiently robust to treat certain mean-field problems with non-binary interaction. A prime example for this kind of interaction is sedimentation.
The intricate gravity-driven dynamics of particles suspended in a Stokes fluid is, to leading order, given by the binary interaction through the interaction kernel $K=\Phi g$ where $g$ is the constant vector of gravitational acceleration and $\Phi$ is the Stokes kernel (Oseen tensor)
\begin{align} \label{Oseen}
    \Phi(x)=\frac 1{8\pi|x|}\bra{\Id+\frac{x\otimes x}{|x|^2}}.
\end{align}
Deriving mean-field limits for such systems, starting from a coupled fluid-particle system, builds on the combination of two ingredients. First, a good approximation of the implicit interaction by the explicit one, and second, a mean-field limit for the explicit system \eqref{eq:micro}. In order to be able to do the first approximation step, it is essential to have a good control on the minimal distance between the particles during the evolution. Therefore, despite the tremendous progress in mean-field theory in recent years, all the
results on mean-field limits for sedimentation  \cite{Hofer18MeanField, Mecherbet19, Hofer&Schubert, HoferSchubert23a, Duerinckx23} are built on Hauray's method. In these results, Hauray's method has been improved, firstly to allow to replace the $\infty$-Wasserstein distance by $p$-distances, secondly to include error terms (that arise from the binary approximation), and thirdly regarding the assumptions on the Wasserstein distance and minimal inter-particle   distance. However, these improvements have not allowed to push the threshold $\alpha < \frac {d-2}2$ for propagation of chaos, provided by this method. Since  for sedimentation one has $\alpha = 1$ and $d =3$, the results in \cite{Hofer18MeanField, Mecherbet19, Hofer&Schubert, HoferSchubert23a,Duerinckx23} therefore only hold for sufficiently well prepared initial particle configurations, leaving propagation of chaos as an important open problem.
On the level of the minimal inter-particle distance, so far Hauray's method requires $\dmin \gg N^{-1/2}$ for $\alpha = 1$ and $d=3$
whereas chaotic initial data typically have $\dmin \sim N^{-2/3}$. More generally, the threshold in Hauray's method for the minimal inter-particle distance is $\dmin \gg N^{-1/(1+\alpha)}$ (for any space dimension $d$).\\

% If the mean-field limit holds true, another interesting question is whether information on the microscopic system \eqref{eq:micro} can (at least for large $N$) be extracted from properties of the limiting system \eqref{eq:macro}, for instance, 

% Control of the minimal distance allows to approximate systems with non-binary interaction by systems of the form \eqref{eq:micro} and thus allows to derive mean-field limits in this context. .\\

In the present article we show that for non-attractive kernels (an important example being the Stokes kernel), the assumptions on the inter-particle distance and the Wasserstein distance in Hauray's method can be relaxed. Our strategy exploits the non-attractive property in order to essentially replace $\dmin$ by the minimal distance to the next-to-nearest particle $\dminone$ both as a quantity to be monitored during the evolution, and in the assumptions on the initial configuration. This leads to a  replacement of the threshold $\dmin \gg N^{-1/(1+\alpha)}$ by  $\dminone \gg N^{-1/(1+\alpha)}$. Since $\dminone$  behaves much better than $\dmin$ for chaotic initial data ($\dmin \sim N^{-2/d}, \dminone \sim N^{-3/(2d)}$ with overwhelming probability), we deduce propagation of chaos under the less stringent condition $\alpha < \min\{ \frac{d-1}2,\frac{2d-3}{3}\} $ (compared to the previous threshold $\alpha < \frac {d-2}2$).
Unfortunately, the case of sedimentation, our guiding example, is critical in the sense that $\alpha = 1 = \frac {d-1}2 = \frac{2d-3}{3}$ in dimension $d = 3$, and, statistically, $\dminone \sim N^{-1/2} = N^{-1/(1+\alpha)}$.
Therefore, our methods currently do not seem to allow to treat propagation of chaos for sedimentation, but they can be used to considerably relax the assumptions on the initial particle positions in \cite{Hofer18MeanField, Mecherbet19, Hofer&Schubert, HoferSchubert23a, Duerinckx23}.

% investigate under which conditions on $K$ and on the initial data, the mean-field limit can be derived quantitatively. In particular we ask how singular $K$ can be so that the mean-field limit holds almost surely if the microscopic initial positions $X_i^0$ are i.i.d. according to the law $\rho^0$ while retaining almost sure control of the minimal distance.

\subsection{Notation}

\begin{itemize}
 \item For $ X\in  \bra{\R^d}^N$ we write 
    \begin{align}
        \dmin &\coloneqq \min_i \min_{j \neq i} |X_i - X_j|, \\
        d_{ij} &\coloneqq \begin{cases}
                |X_i - X_j|, & \quad  \text{for} ~ i \neq j, \\
                \dmin & \quad  \text{for} ~ i = j.
                \end{cases}
    \end{align}
   For a given index $i$, there is an index $i_{nn}\neq i$ such that $d_{ii_{nn}}\le d_{ij}$ for all $j\neq i$ (a nearest neighbour). If there are several nearest neighbours we choose one to be $i_{nn}$. This implies that $i_{nn}$ may be time-dependent. We write
   \begin{align}
        \dminone \coloneqq \min_i\min_{j\neq i,i_{nn}} d_{ij}. \\
    \end{align}
    It may be that $\dmin=\dminone$ for a configuration with two nearest neighbours. We will usually not explicitly denote the $N$-dependence for quantities like $\dmin,\dminone$. However we will always denote by $\rho_N$ the empirical density corresponding to $X$.
    
    \item For $\delta > 0$, we introduce the set of particles with close nearest neighbours, i.e. for $\delta > 0$, we denote
    \begin{align}
        D_\delta(X) := \{ X_i : d_{i,i_{nn}} < \delta\},
    \end{align}
    and we will typically omit the dependence on $X$.

    \item The pushforward of a measure $\rho$ by a measurable map $\Phi$ is defined by $\Phi_\#\rho(A)=\rho(\Phi^{-1}(A))$ for all measurable sets $A$. For probability measures $\rho^1,\rho^2\in \mP(\R^d)$ we denote by $\Gamma(\rho^1,\rho^2)$ the set of couplings, i.e. probability measures $\gamma\in\mP(\R^d\times \R^d)$ such that ${\pi_i}_\#\gamma=\rho^i$ for $i=1,2$ with $\pi_i$ being the projection to the respective component. For $p\in [1,\infty)$ we denote the $p$-Wasserstein distance by
\begin{align*}
    \W_p(\rho^1,\rho^2)\coloneqq \inf_{\gamma\in \Gamma(\rho^1,\rho^2)}\left(\int_{\R^3\times \R^3}|x-y|^p\dd \gamma(x,y)\right)^\frac 1p,
\end{align*}
while the $W_\infty$ distance is given by 
\begin{align*}
    \W_\infty(\rho^1,\rho^2)\coloneqq \inf_{\gamma\in \Gamma(\rho^1,\rho^2)}\gamma-\!\!\!\!\esssup_{(x,y)\in \R^d\times \R^d} |x-y|.
\end{align*}
    \item We will consider measures that have a density with respect to the Lebesgue measure and will write $\rho\in \mP(\R^d)\cap L^\infty(\R^d)$ when $\rho$ is a probability measure and its density (which we also call $\rho$) is in $L^\infty(\R^3)$.
    
    \item We abbreviate  $\|\cdot\|_{s}=\|\cdot\|_{L^s(\R^d)}$.
    
    \item We write $A\ls B$ to mean $A\le CB$ for some global constant $C<\infty$ that does not depend on the specific particle configuration or $N$. However,
we will sometimes allow $C$ to depend on the initial density for the macroscopic system $\rho^0$ when indicated in the statements. We use $A\ll B$ to imply that the quotient $A/B\to 0$ as $N\to \infty$.

\end{itemize}

\subsection{Statement of the main results}

Our first main result is a deterministic statement, that establishes a mean-field limit under the assumption that the kernel is non-attractive and that the initial distances of the particles scale according to the singularity of the kernel. Let $0< \alpha < d-1$. We assume the following on the kernel $K:\R^d\to \R^d$.
\begin{align} \label{eq:C_alpha} \tag{$C_\alpha$}
	\forall x \in \R^d&: |K(x)| + |x| |\nabla K(x)| \leq \frac{C_K}{|x|^{\alpha}}, ~ \dv K = 0, \\\forall x \in \R^d&:
 (K(x)-K(-x))\cdot x\ge 0. \label{ass:nonattractive}
 \tag{NA}
\end{align}

For $\rho^0\in \mP(\R^d)\cap L^\infty(\R^d)$ it is classical, that \eqref{eq:macro} has a unique global distributional solution since $K \ast \rho$ is Lipschitz (see \cite{Hauray09}).

% \begin{thm} \label{th:deterministic}
%     There exists  $\eps_0>0$ depending only on $\alpha$ with the following property. Let $T>0$, $\rho^0\in \mP(\R^d)\cap L^\infty(\R^d)$, $p \in [1,\infty)$. For each $N \in \N$ consider initial particle positions $X_{i}^0$, $1 \leq i \leq N$ such that
%     \begin{align}
%         \dmin(0) \ge N^{-2/d - \eps_0}, \label{ass:dmin} \\
%         \dminone(0) \ge N^{-3/(2d) - \eps_0}, \label{ass:dminone} \\
%         \#\{X_i^0 : 1 \leq i \leq N, d_{i,i_{nn}}(0) < N^{-3/(2d) - \eps_0} \} \le N^{-\frac p d}, \label{ass:number.bad} \\
%        \forall i \neq j \neq k \neq i : d_{ij}(0) d_{ik}(0) \ge N^{-\frac 3 d - \eps_0}, \label{ass:dminone.strong}\\
%         \W_p(\rho_N^0,\rho^0) \le N^{-\frac{\alpha(2d\eps_0+1)(d+p)}{(2d(d-\alpha)p)}}. \label{ass:W_p} 
%     \end{align}
%     Let $\rho(t),\rho_N(t)$ be the solutions of the corresponding equations \eqref{eq:macro} and \eqref{eq:micro}, respectively. Then, there exist $N_0,C>0$ only depending on $T,d,\rho^0$ such that for all $N \geq N_0$ it holds that
%     \begin{align}
%         \sup_{t\le T}\W_p(\rho_N(t),\rho(t))\le C\W_p(\rho_N^0,\rho^0).
%     \end{align}
% \end{thm}
\begin{thm} \label{th:deterministic.new}
    Let $0< \alpha < d-1$. Assume \eqref{eq:C_alpha} and \eqref{ass:nonattractive}. Let $T>0$, $\rho^0\in \mP(\R^d)\cap L^\infty(\R^d)$, $p \in [1,\infty]$. For each $N \in \N$, consider initial particle positions $X_i^0$, $1 \leq i \leq N$ such that  there exists  $ \delta_N \leq \dminone(0)$ with 
    \begin{align}
        % \dmin(0) \gg N^{- \frac 1 {\alpha - 1}}, \label{ass:dmin} \\
        % \dminone(0) \gg  N^{-3/(2d) - \eps_0} \label{ass:dminone}, \\
        \W_p(\rho_N^0,\rho^0)\to 0&, \label{ass:conv}\\
              \frac{1}{N^{(\alpha+1)/d} \delta_N^{\alpha+1}}\left(\W_p(\rho_N^0,\rho^0)^{\frac{(d-\alpha-1)p}{d+p}} +\left(N^{-p} \rho_N^0(D_{\delta_N}) \dmin(0)^{-\alpha p}\right)^{\frac{(d-\alpha-1)}{d+p}}\right)\to 0  \label{ass:W_p}&,  \\[3pt]
        %N^{-1}\#\{X_i^0 : 1 \leq i \leq N, d_{i,i_{nn}} < \delta_N \} \left(N^{-1} \dmin(0)^{-\alpha} + N^{-\frac{d -\alpha }{d}}\right)^p &\leq C \W_p(\rho_N^0,\rho^0)^p,\quad \label{ass:number.bad} \\[3pt]
     \forall i \neq j \neq k \neq i: \quad  ( d_{ij}(0) \leq \delta_N \Longrightarrow d_{ik}(0) \gg \delta_N )&,  \label{ass:dminone.strong.1} \\[3pt]
             \forall i \neq j \neq k \neq i: \quad  ( d_{ik}(0) < \delta_N \Longrightarrow \frac 1 N d_{ij}(0)^{-1} d_{ik}(0)^{-\alpha} \ll 1 )&  \label{ass:dminone.strong.2} 
    \end{align}
    for all $N \in \N$.
    Let $\rho(t),\rho_N(t)$ be solutions of the corresponding equations \eqref{eq:macro} and \eqref{eq:micro} respectively. Then, there exist $C>0$ only depending on $d,\rho^0,p,C_K$, and $N_0 > 0$, depending in addition on $T$,  such that for all $N \geq N_0$ and all $t\in [0,T]$
    \begin{align}
        \W_p(\rho_N(t),\rho(t))
        &\le C\left(\W_p(\rho_N^0,\rho^0)+N^{-1}(\rho_N^0(D_{\delta_N}))^{1/p} \dmin(0)^{-\alpha}\right)e^{Ct}, \label{est:W.thm}\\
        \forall i\neq j: d_{ij}(t)&\ge  e^{-Ct}d_{ij}(0).
    \end{align}
\end{thm}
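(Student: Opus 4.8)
The plan is to run a $p$-Wasserstein version of Dobrushin's stability argument for \eqref{eq:macro}, coupled to a Gronwall bound for the pairwise distances, and to close both simultaneously by a continuation argument. Let $\phi_t$ denote the flow of the globally Lipschitz, divergence-free mean-field velocity $u(t,\cdot)=K\ast\rho(t)$, so that $\rho(t)=(\phi_t)_\#\rho^0$ and, since $\phi_t$ preserves Lebesgue measure, $\|\rho(t)\|_\infty=\|\rho^0\|_\infty$; together with $\alpha<d$ and \eqref{eq:C_alpha} this gives the uniform bound $\|\nabla(K\ast\rho(t))\|_\infty\ls_{\rho^0,C_K}1$. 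Fix an optimal $\W_p$-coupling $\gamma_0$ of $\rho_N^0$ and $\rho^0$, disintegrate it with respect to the first variable as $\gamma_0=\frac1N\sum_i\delta_{X_i^0}\otimes\nu_i^0$ (so $\frac1N\sum_i\nu_i^0=\rho^0$), and let it evolve along the particle dynamics in the first variable and along $\phi_t$ in the second: $\gamma_t:=\frac1N\sum_i\delta_{X_i(t)}\otimes(\phi_t)_\#\nu_i^0\in\Gamma(\rho_N(t),\rho(t))$. Then $\W_p(\rho_N(t),\rho(t))\le\mathcal D_p(t)$ where $\mathcal D_p(t)^p:=\frac1N\sum_i\int|X_i(t)-\phi_t(z)|^p\,\dd\nu_i^0(z)$, with $\mathcal D_p(0)=\W_p(\rho_N^0,\rho^0)$ (the case $p=\infty$ being analogous, working with the $\W_\infty$-coupling, and suppressed here). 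Differentiating in time, using $\dot X_i=(K\ast\rho_N)(X_i)$ (with $K(0)=0$) and $\partial_t\phi_t(z)=(K\ast\rho)(\phi_t(z))$, splitting $\dot X_i-\partial_t\phi_t(z)$ into the \emph{consistency error} $(K\ast\rho_N)(X_i)-(K\ast\rho)(X_i)$ plus the Lipschitz remainder $(K\ast\rho)(X_i)-(K\ast\rho)(\phi_t(z))$, bounding the remainder by the uniform Lipschitz estimate, and applying Hölder's inequality twice (in $z$ and in $i$), one obtains
\[
\frac{\dd}{\dd t}\mathcal D_p\le C\,\mathcal D_p+\mathcal E_N,\qquad
\mathcal E_N:=\Big(\tfrac1N\textstyle\sum_i\big|(K\ast\rho_N)(X_i)-(K\ast\rho)(X_i)\big|^p\Big)^{1/p},
\]
with $C=C(d,p,\rho^0,C_K)$.

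The crux is to bound the consistency error $\mathcal E_N$ by the right-hand side of \eqref{est:W.thm}, up to a multiple of $\mathcal D_p$ and factors $e^{Ct}$. Rewriting $(K\ast\rho)(X_i)=\frac1N\sum_j\int K(X_i-\phi_t(z))\,\dd\nu_j^0(z)$ via the coupling gives
\[
(K\ast\rho_N)(X_i)-(K\ast\rho)(X_i)=\tfrac1N\sum_{j\ne i}\int\!\big(K(X_i-X_j)-K(X_i-\phi_t(z))\big)\,\dd\nu_j^0(z)-\tfrac1N\!\int\! K(X_i-\phi_t(z))\,\dd\nu_i^0(z).
\]
I would split $\{j\ne i\}$ according to whether $d_{ij}(t)$ lies below or above a length of order $\delta_N$ (shrunk by the factor $e^{Ct}$ from the distance bound, and using the bootstrap smallness of $\mathcal D_p$). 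For the \emph{far} particles, the mean-value inequality with $|\nabla K(x)|\le C_K|x|^{-\alpha-1}$ combined with a discrete $L^\infty$-type control on the number of particles in balls $B(X_i,r)$, $r\gs\delta_N$ — of order $N\big(\|\rho^0\|_\infty r^d+\W_p(\rho_N(t),\rho(t))^{\text{power}}+\cdots\big)$, the interpolation exponents being precisely those that surface in \eqref{ass:W_p} — yields a contribution $\ls C\mathcal D_p+(\text{a small power of }\W_p)/\delta_N^{\alpha+1}$. For the \emph{close} particles the structural novelty enters: by \eqref{ass:dminone.strong.1}, propagated in time through the distance bound, a particle $X_i$ having a neighbour at distance $<\delta_N$ has \emph{exactly one}, namely $X_{i_{nn}}$, all others being at distance $\gg\delta_N$; its contribution is estimated directly by $\tfrac1N d_{i,i_{nn}}(t)^{-\alpha}$ (plus lower-order terms), and summing the $p$-th power over $X_i\in D_{\delta_N}$ produces exactly the term $N^{-1}(\rho_N^0(D_{\delta_N}))^{1/p}\dmin(0)^{-\alpha}$ of \eqref{est:W.thm}, after controlling the time evolution of $D_{\delta_N}$ and of $\dmin$ through the distance bound. (The moderate-distance range and the last $\nu_i^0$-term are handled along the same lines, invoking \eqref{ass:dminone.strong.2}.) I expect this estimate — and in particular making the counting and clustering bounds function with control of $\dminone$ only, rather than $\dmin$ — to be the main obstacle.

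For the distance lower bound, fix $i\ne j$, set $x=X_i-X_j$, and note that in
\[
\frac{\dd}{\dd t}|X_i-X_j|=\frac{x}{|x|}\cdot\Big(\tfrac1N\big(K(x)-K(-x)\big)+\tfrac1N\sum_{k\ne i,j}\big(K(X_i-X_k)-K(X_j-X_k)\big)\Big)
\]
the first term is $\ge0$ by \eqref{ass:nonattractive} — this is exactly where non-attractivity rules out nearest-neighbour collisions, so that monitoring $\dminone$ suffices — while the second is bounded below by $-\tfrac{C_K}{N}|x|\sum_{k\ne i,j}\dist\big(0,[X_i-X_k,X_j-X_k]\big)^{-\alpha-1}\ge-C|x|$, using $|\nabla K|\le C_K|\cdot|^{-\alpha-1}$, the fact that $d_{ik},d_{jk}\gs\dminone$ for the relevant $k$, and the discrete counting bound; when $X_i,X_j$ are themselves a close pair one additionally invokes \eqref{ass:dminone.strong.1}–\eqref{ass:dminone.strong.2} to keep those segments away from the origin and the sum $O(1)$. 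This yields $d_{ij}(t)\ge e^{-Ct}d_{ij}(0)$. Finally, both estimates are closed simultaneously on the maximal subinterval of $[0,T]$ on which suitable bootstrap inequalities hold (for instance $\mathcal D_p(t)\le\delta_N$ and $d_{ij}(t)\ge\tfrac12 e^{-Ct}d_{ij}(0)$): Gronwall applied to the first differential inequality together with the bound on $\mathcal E_N$ gives \eqref{est:W.thm}, and the assumptions \eqref{ass:conv}, \eqref{ass:W_p}, \eqref{ass:dminone.strong.1}, \eqref{ass:dminone.strong.2} are calibrated exactly so that, for $N\ge N_0(T)$, the resulting bounds strictly improve these hypotheses, forcing the subinterval to be all of $[0,T]$.
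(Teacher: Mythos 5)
Your proposal is correct and follows essentially the same strategy as the paper: a Dobrushin-type coupling evolved along the particle flow and the mean-field flow, a splitting of the consistency error according to whether a particle has a $\delta_N$-close neighbour (at most one, by \eqref{ass:dminone.strong.1}), a counting/interpolation lemma controlling truncated singular sums $\sum_{d_{ij}>\delta} d_{ij}^{-\alpha-1}$ in terms of $\W_p$, the non-attraction identity to drop the mutual nearest-neighbour term in the distance ODE, and a simultaneous continuation argument. The only cosmetic differences are that the paper works with $\eta(t)=\int|x-y|^p\,\dd\gamma_t$ rather than its $p$-th root, uses the symmetrized pointwise bound $|K(x)-K(y)|\ls|x-y|(|x|^{-\alpha-1}+|y|^{-\alpha-1})$ rather than $\dist(0,[x,y])^{-\alpha-1}$ (which is needed, since the segment could pass near the origin), and encodes the bootstrap via the explicit cutoff sum $S_{\alpha+1,\delta(t)}\le NL_2$ with a time-shrinking $\delta(t)$ rather than via $\mathcal D_p(t)\le\delta_N$.
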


\begin{rem}\label{rem:det}
\begin{enumerate}[(i)]
  \item In the case $p=\infty$, condition \eqref{ass:W_p} has to be understood in the sense
  \begin{align*}
      \frac{1}{N^{(\alpha+1)/d} \delta_N^{\alpha+1}}\left(\W_\infty(\rho_N^0,\rho^0)^{d-\alpha-1}+(N^{-1}\dmin(0)^{-\alpha})^{d-\alpha-1} \right)\to 0.
  \end{align*}
  \item Note that the threshold $\alpha<d-1$ corresponds to integrability of $\nabla K$ at the origin. The lower bound $\alpha>0$ is for convenience to control the behaviour of $K$ at infinity. It is not relevant for compactly supported $\rho^0$. It is possible to relax the structural assumption $\dv K = 0$ at the cost of losing global in
  	time existence of the limiting equation (cf.  \cite{CarrilloChoiHauray14}).
  \item \eqref{ass:conv} is not necessary. It can be removed by replacing $\dmin(0)^{-\alpha p}$ by $(\dmin(0)+N^{-1/d})^{-\alpha p}$ in \eqref{ass:W_p}. We prefer however to make the quite natural assumption \eqref{ass:conv} in order to be more concise.
  \item\label{it:Wass_scaling} Note that the second term on the right-hand side of \eqref{est:W.thm} can be absorbed into the first one if it is bounded by $N^{-1/d}$ since $\W_p(\rho_N(t),\rho(t)) \gtrsim N^{-1/d}$ (see \cite[equation (1.11)]{Hofer&Schubert}, \cite[equation (1.14)]{HoferSchubert23a}).
  \item As mentioned in the introduction, this result allows in particular to relax the assumptions on the initial configurations in the derivation of the transport-Stokes equation as mean-field limit for sedimenting particles (cf. \cite{Hofer18MeanField,Mecherbet19,Hofer&Schubert,HoferSchubert23a}) in $\R^3$. Combining Theorem~\ref{th:deterministic.new} with the method of reflections to approximate the true fluid velocity, it is possible to allow for configurations that are close to random in the sense that they merely satisfy $\dmin\gg N^{-2/3}$, $\dminone\gg N^{-1/2}$, and with statistics as in \eqref{ass:dminone.strong.1}--\eqref{ass:dminone.strong.2},  whereas before the threshold was $\dmin\gg N^{-1/2}$ when $\W_p(\rho_N,\rho)\sim N^{-1/3}$, which is contained in the present result. 
\end{enumerate}
\end{rem}

For our second main result we observe that  particles with i.i.d. initial positions already satisfy the above initial conditions with overwhelming probability if the singularity of the kernel is not too strong, such that we get the following probabilistic statement.

\begin{thm}\label{th:main}
    Let $d\ge 2$ and $0 < \alpha< \min\{ \frac{2d-3}3, \frac{d-1}{2}\}$. Let $\rho^0\in \mP(\R^d)\cap L^\infty(\R^d)$ be the pushforward by a Lipschitz map of the uniform measure on the cube $\1_Q$. Let $T>0$. Let the family $\rho_N^0$ be empirical measures of $N$ i.i.d. particles  with law $\rho_0$. Let $\rho(t),\rho_N(t)$ be solutions of the corresponding equations \eqref{eq:macro} and \eqref{eq:micro} respectively. Then, for $p \in [1,\infty]$ with 
   \begin{align*}
        p>\frac{d(\alpha+1)}{2d-3(\alpha+1)},
    \end{align*}
    there exists $C>0$, only depending on $d,\rho^0,p$, and $C_K$, such that 
    \begin{align}
        \lim_{N\to \infty}\P\left(\forall t\in[0,T]:~\W_p(\rho_N(t),\rho(t))\le C \W_p(\rho_N^0,\rho^0)e^{Ct}\right)&=1,\qquad\label{eq:prob1}\\
        \lim_{N\to \infty}\P\left(\forall t\in [0,T], i\neq j:~d_{ij}(t)\ge e^{-Ct} d_{ij}(0)\right)&=1.
    \end{align}
\end{thm}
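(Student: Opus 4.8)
We deduce Theorem~\ref{th:main} from the deterministic Theorem~\ref{th:deterministic.new} by showing that i.i.d.\ data meet its hypotheses with probability tending to $1$ for a suitable \emph{deterministic} scale. Fix a small $\eps>0$, to be chosen at the end in terms of $d,\alpha,p$, and set $\delta_N:=N^{-\frac{3}{2d}-\eps}$. The claim is that the event $\mathcal E_N$ on which all of
\begin{align*}
 \W_p(\rho_N^0,\rho^0)\le N^{-\frac1d+\eps},\qquad \dmin(0)\ge N^{-\frac2d-\eps},\qquad \dminone(0)\ge \delta_N ,\qquad \rho_N^0(D_{\delta_N})\le N\delta_N^{\,d}N^{\eps}
\end{align*}
hold, together with \eqref{ass:dminone.strong.1}--\eqref{ass:dminone.strong.2} for this $\delta_N$, satisfies $\P(\mathcal E_N)\to1$. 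Granting this, on $\mathcal E_N$ all hypotheses \eqref{ass:conv}--\eqref{ass:dminone.strong.2} of Theorem~\ref{th:deterministic.new} hold (the verification of \eqref{ass:W_p} from the displayed bounds being addressed below), so \eqref{est:W.thm} is available; moreover the error term $N^{-1}(\rho_N^0(D_{\delta_N}))^{1/p}\dmin(0)^{-\alpha}$ there is $\ls N^{-1/d}$ once $\eps$ is small (this is where $\alpha<\tfrac{d-1}2$, i.e.\ $\tfrac{2\alpha}d<\tfrac{d-1}d$, enters), hence $\ls\W_p(\rho_N^0,\rho^0)$ by the deterministic lower bound $\W_p(\rho_N^0,\rho^0)\gs N^{-1/d}$ recalled in Remark~\ref{rem:det}; absorbing it yields \eqref{eq:prob1}, while the second assertion of Theorem~\ref{th:main} is exactly the distance bound in \eqref{est:W.thm}.

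\textbf{The probabilistic estimates.} Since $\rho_N^0=\Phi_\#(\tfrac1N\sum_i\delta_{Y_i^0})$ for i.i.d.\ uniform $Y_i^0$ on $Q$, the Wasserstein bound (hence \eqref{ass:conv}) follows from $\W_p\le\W_\infty$, the inequality $\W_p(\Phi_\#\mu,\Phi_\#\nu)\le\mathrm{Lip}(\Phi)\,\W_p(\mu,\nu)$, and the classical bound $\W_\infty(\tfrac1N\sum_i\delta_{Y_i^0},\1_Q)\le N^{-1/d+\eps}$ for $N$ large, valid with overwhelming probability. Conditioning on a point and using $\P(|X_i^0-X_j^0|<r)\ls\|\rho^0\|_\infty r^d$, a union bound over pairs gives $\P(\dmin(0)<r)\ls N^2 r^d$ and over triples $\P(\dminone(0)<r)\ls N^3 r^{2d}$ (using independence of $X_j^0,X_k^0$ given $X_i^0$), both $o(1)$ at the respective scales $N^{-2/d-\eps}$ and $\delta_N$. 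Finally $\E[\rho_N^0(D_{\delta_N})]\ls N\delta_N^{\,d}$, and Markov's inequality gives the last bound with probability $1-O(N^{-\eps})$.

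\textbf{Conditions \eqref{ass:dminone.strong.1}--\eqref{ass:dminone.strong.2} and \eqref{ass:W_p}.} With $M_N:=N^\eps$ one has $\P(\exists\,i\neq j\neq k\neq i:\ d_{ij}(0)\le\delta_N,\ d_{ik}(0)\le M_N\delta_N)\ls N^3 M_N^d\delta_N^{2d}=o(1)$ for $\eps$ small, and on the complement $d_{ij}(0)\le\delta_N$ forces $d_{ik}(0)>N^\eps\delta_N\gg\delta_N$, i.e.\ \eqref{ass:dminone.strong.1} holds. For \eqref{ass:dminone.strong.2}, put $\eps_N:=N^{-\eps}$ and intersect with $\{\dmin(0)\ge N^{-2/d-\eps}\}$, so that every offending triple has $d_{ik}(0)=|X_i^0-X_k^0|\in[N^{-2/d-\eps},\delta_N)$; since $\alpha<\tfrac d2$, on this range $\tfrac1{N\eps_N}d_{ik}(0)^{-\alpha}<1$. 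Conditioning on $X_i^0$ and integrating the joint law of $(|X_i^0-X_k^0|,|X_i^0-X_j^0|)$ --- crucially, rather than estimating by the worst case --- a union bound over triples gives
\begin{align*}
 \P\Big(\exists\,i\neq j\neq k\neq i:\ d_{ik}(0)<\delta_N,\ d_{ij}(0)<\tfrac{d_{ik}(0)^{-\alpha}}{N\eps_N}\Big)\ \ls\ \frac{N^3}{(N\eps_N)^d}\int_{N^{-2/d-\eps}}^{\delta_N}r^{d-1-\alpha d}\,\mathrm{d}r,
\end{align*}
and evaluating the (elementary) integral shows this is $o(1)$ precisely because $\alpha<\tfrac{2d-3}3$ when $\alpha<1$, and because $\alpha<\tfrac{d-1}2$ when $1\le\alpha<d-1$ (possible only for $d\ge3$). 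Finally, inserting the four displayed bounds into \eqref{ass:W_p} --- whose first term is the binding one, the case $p=\infty$ being treated via the reformulation in Remark~\ref{rem:det}\,(i) --- reduces it to $p>\tfrac{d(\alpha+1)}{2d-3(\alpha+1)}$, which in particular forces $2d>3(\alpha+1)$, i.e.\ $\alpha<\tfrac{2d-3}3$. Choosing $\eps$ small enough that all resulting exponents are strictly negative, Theorem~\ref{th:deterministic.new} applies on $\mathcal E_N$ and the proof is complete.

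\textbf{The main obstacle.} The delicate step is \eqref{ass:dminone.strong.2}. The crude bound $d_{ij}(0)^{-1}d_{ik}(0)^{-\alpha}\le\dminone(0)^{-1}\dmin(0)^{-\alpha}$ only yields the much weaker threshold $\alpha<\tfrac{2d-3}4$; the improvement to $\alpha<\tfrac{2d-3}3$ requires arguing at the level of probabilities, using that an anomalously small $d_{ik}(0)$ is itself rare and (essentially) independent of $d_{ij}(0)$, and it is essential to \emph{first} restrict to the typical event $\dmin(0)\gs N^{-2/d}$: on the full probability space the configurations with $d_{ik}(0)\to0$ contribute a spurious constraint $\alpha<d/3$, fatal for $d\ge4$. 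After these reductions, checking that the exponents coming from \eqref{ass:W_p} and from the error term in \eqref{est:W.thm} are negative throughout $\alpha<\min\{\tfrac{2d-3}3,\tfrac{d-1}2\}$, $p>\tfrac{d(\alpha+1)}{2d-3(\alpha+1)}$ is routine.
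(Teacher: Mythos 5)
Your proof is correct and follows essentially the same route as the paper: you apply the deterministic Theorem~\ref{th:deterministic.new} with the same choice $\delta_N = N^{-3/(2d)-\eps}$, and your in-line verification of \eqref{ass:dminone.strong.2} (condition on $X_i^0$, integrate the joint law of $(d_{ik},d_{ij})$ after first restricting to $\dmin(0)\gs N^{-2/d-\eps}$) is exactly the content of the paper's Lemma~\ref{lem:three_particle2}, yielding the same case split $\alpha<1$ versus $\alpha\ge1$ and the same thresholds. The only substantive deviation is that you bound $\rho_N^0(D_{\delta_N})$ via $\E[\rho_N^0(D_{\delta_N})]\ls N\delta_N^d$ and Markov, whereas the paper proves a Cram\'er-type exponential concentration bound (Lemma~\ref{lem:number_close_pairs}); since the theorem only requires the probability to tend to $1$, your simpler polynomial tail suffices here, though the paper's bound is quantitatively much stronger.
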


\begin{rem}
\begin{enumerate}[(i)]
\item The condition that $\rho^0$ should be the Lipschitz-pushforward of the unit distribution on the cube can probably be relaxed and is owed solely to the state of the art in estimating the tail behaviour for the scaling of the Wasserstein distance for randomly seeded particles (see Lemma \ref{lem:Wass_scaling} below). The theorem will work for any $\rho^0\in \mP(\R^d)\cap L^\infty(\R^d)$ for which the Wasserstein distance $\W_p(\rho_N,\rho)$ scales (up to logarithms) like $N^{-1/d}$ with asymptotic probability 1.  
\item Notice that the constraint on $p$ degenerates as $\alpha\to \frac {2d-3}3$ which is smaller than $\frac{d-1}2$ in $d=2$, coincides with $\frac{d-1}2$ in $d=3$, and is larger than $\frac{d-1}2$ for $d\ge 4$. The reason for the bound of $\alpha$ lies in condition \eqref{ass:dminone.strong.2}, which in turn is needed to maintain control of the interparticle distances. Here, an interesting change in scaling takes place in dimension $d=3$: As established in Lemma \ref{lem:three_particle1} below, for triples of particles to appear with non-vanishing probability, the product of the smallest and largest appearing distance must be at least of order $N^{-3/d}$, while the smallest appearing distance in pairs of particles is of order $N^{-2/d}$. Thus, while for higher dimensions ($d\ge 3$, where the threshold for $\alpha$ is $\ge 1$), the critical triples of particles in condition \eqref{ass:dminone.strong.2} are the ones where two particles are very close (with distance of order $N^{-2/d}$) and one is at distance $N^{-1/d}$, the critical triples for $d=2$ are the ones with mutually comparable spacing of order $N^{-3/(2d)}$. 
  % \item One can ask in which cases it is possible to absorb the error term in \eqref{eq:prob1}. Due to the lower bound for the scaling of the Wasserstein distance (Remark \ref{rem:det}\ref{it:Wass_scaling}), one obtains the condition
  %   \begin{align*}
  %       \frac{-2p(d-2\alpha)-d}{2dp}\le -\frac 1d,
  %   \end{align*}
  % which translates to 
  % \begin{align*}
  %   p<\frac 12\left(\frac 12\sqrt{4(\alpha^2+\alpha(2-3d)+(d-2)d)^2-8d(\alpha-d+1)}+2\alpha^2-3\alpha d+2\alpha+d^2-2d\right).
  % \end{align*}
  % \rs{This upper bound is above $p_\ast$ for $d\ge 3$ and $\alpha$ close to $0$ so that at least for some $\alpha$ there exists $p$ such that \eqref{eq:prob1} is satisfied without the error term.} 
  \item This result considerably relaxes the assumption on the singularity of non-attractive interaction kernels known so far. Indeed, in \cite{Hauray09} the threshold for such a probabilistic result is $\alpha < \frac {d-2}2$ (see \cite[Remark~2.3]{Hauray09}). The more recent result \cite[Theorem~2.1]{HoferSchubert23a}  does not improve this threshold. 
  \item Note that $\alpha=1$ is critical in $d=3$. This is exactly the singularity of the Oseen-tensor \eqref{Oseen}, which shows that the problem of sedimentation is critical for the treatment of i.i.d. particles.
\end{enumerate}
\end{rem}

We prove Theorem~\ref{th:deterministic.new} in Section~\ref{sec:proof_main} and Theorem~\ref{th:main} in Section~\ref{sec:prob}.

\section{Proof of the deterministic result}\label{sec:proof_main}

\subsection{Control on the sum of inverse distances}

We define the cut-off sum
\begin{align} \label{S_beta.delta}
    S_{\beta,\delta}=\sup_i\sum_{\{j:d_{ij}>\delta\}}d_{ij}^{-\beta}.
\end{align}
 The following result is a straight-forward adaption of Lemma~\cite[Lemma 2.3]{HoferSchubert23a}. We provide the proof for the convenience of the reader.

\begin{lem} \label{lem:sums.Wasserstein}
Let $\beta \in (0,d)$, $p \in [1,\infty]$. Furthermore let $X \in (\R^d)^N$, and $\sigma_N = \frac 1 N \sum \delta_{X_i}$, as well as  $\sigma \in \mP(\R^d)\cap L^\infty(\R^d) $. Let $M,\delta>0$ be such that 
\begin{align}
    \sup_i\#\{j:X_j\in B_{\delta}(X_i)\}\le M. 
\end{align}
Then, $S_{\beta,\delta}$ is estimated by
\begin{align}
 \label{eq:sums.Wasserstein.p}
	\frac 1NS_{\beta,\delta}\lesssim \|\sigma\|_{\infty}^{\frac{\beta}d} +\left(\frac{M^{\beta/d}\|\sigma\|_{\infty}^{\frac{d-\beta}d \frac {p} {d+p }}}{N^{\beta/d} \delta^{\beta}}+\left(\frac{M^{\beta/d}\|\sigma\|_{\infty}^{\frac{d-\beta}d }}{N^{\beta/d} \delta^{\beta}}\right)^{\frac{\beta+p}{d+p}}\right) (\W_p(\sigma_N,\sigma))^{\frac{(d-\beta)p}{d+p}}.
\end{align}

In the case $p = \infty$, this should be understood as
\begin{align} \label{eq:sums.Wasserstein}
	\frac 1NS_{\beta,\delta}
	\lesssim \|\sigma\|_{\infty}^{\frac{\beta}{d}} +  M^{\beta/d}\|\sigma\|_{\infty}^{\frac{d-\beta}d}N^{-\beta/d}\delta^{-\beta} (\W_\infty(\sigma_N,\sigma))^{d-\beta}.
\end{align}
\end{lem}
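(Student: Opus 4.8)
The plan is to dyadically decompose the cut-off sum $S_{\beta,\delta}$ according to the size of the inter-particle distances and to control each scale separately: a "near" regime, where distances are of order a carefully chosen threshold $r_*$, handled purely by the volume packing bound $M$; and a "far" regime, where distances exceed $r_*$, handled by comparing the empirical measure $\sigma_N$ to the absolutely continuous measure $\sigma$ via the Wasserstein coupling. The threshold $r_*$ will be optimized at the end to balance the two contributions. Throughout, I would first treat the case $p=\infty$, which is cleaner, and then explain how to adapt to finite $p$.

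\textbf{Step 1 (near regime).} Fix $i$ and consider $\sum_{\{j : \delta < d_{ij} \leq r_*\}} d_{ij}^{-\beta}$. Split the annulus $\{\delta < |x - X_i| \leq r_*\}$ into dyadic shells $\{2^{k}\delta < |x-X_i| \leq 2^{k+1}\delta\}$ for $0 \leq k \lesssim \log_2(r_*/\delta)$. On the shell of radius $\sim 2^k\delta$ the number of particles is at most $M \cdot (2^k)^d$ up to a constant — more precisely one covers the shell by $\lesssim (2^k)^d$ balls of radius $\delta$, each containing at most $M$ particles by hypothesis — and each such $d_{ij}^{-\beta} \lesssim (2^k\delta)^{-\beta}$. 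Summing the geometric series in $k$ (which converges since $\beta < d$) gives a bound of order $M r_*^{d-\beta}\delta^{-\beta}$ — actually one should be careful: the dominant shell is the largest one, so one gets $\lesssim M (r_*/\delta)^d (r_*)^{-\beta} = M r_*^{d-\beta}\delta^{-\beta}$ if $r_* \gg \delta$, and I should double-check the edge case $r_* \sim \delta$ which just gives $\lesssim M\delta^{-\beta}$. Dividing by $N$, the near contribution is $\lesssim N^{-1} M r_*^{d-\beta}\delta^{-\beta}$.

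\textbf{Step 2 (far regime).} For the sum over $\{j : d_{ij} > r_*\}$, I would use that $x \mapsto |x - X_i|^{-\beta}\mathbf{1}_{\{|x-X_i|>r_*/2\}}$ is (up to a constant depending on $r_*$, or better, via a Lipschitz-in-Wasserstein comparison) comparable when integrated against $\sigma_N$ versus $\sigma$. Concretely, $\frac1N\sum_{\{j:d_{ij}>r_*\}}d_{ij}^{-\beta} \leq \int |x - X_i|^{-\beta}\mathbf{1}_{\{|x-X_i|>r_*/2\}}\dd\sigma_N(x)$, and the function $x\mapsto \min(|x-X_i|^{-\beta},(r_*/2)^{-\beta})$ is bounded by $(r_*/2)^{-\beta}$ and Lipschitz with constant $\lesssim r_*^{-\beta-1}$. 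Hence (for $p=\infty$) this integral differs from $\int \min(|x-X_i|^{-\beta},(r_*/2)^{-\beta})\dd\sigma(x)$ by at most $\lesssim r_*^{-\beta-1}\W_\infty(\sigma_N,\sigma)$, and the latter integral against $\sigma\in L^\infty$ is $\lesssim \|\sigma\|_\infty r_*^{d-\beta}$ (integrating a power-law singularity over $\R^d$, cut off at scale $r_*$, again using $\beta<d$). So the far contribution is $\lesssim \|\sigma\|_\infty r_*^{d-\beta} + r_*^{-\beta-1}\W_\infty$. For finite $p$ one must instead work directly with the coupling $\gamma$: transport each $X_i$-mass to the support of $\sigma$, split according to whether the transport distance is $\lesssim r_*$ or not, control the small-displacement part by the Lipschitz bound on the regularized kernel and Hölder's inequality in $p$, and control the large-displacement part (rare, by Markov) crudely by $(r_*/2)^{-\beta}\cdot \W_p^p/r_*^p$; this is the source of the two-term structure and the exponent $\frac{(d-\beta)p}{d+p}$ in \eqref{eq:sums.Wasserstein.p}.

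\textbf{Step 3 (optimization and the term $\|\sigma\|_\infty^{\beta/d}$).} Adding Steps 1 and 2 for $p=\infty$ gives $\frac1N S_{\beta,\delta} \lesssim \|\sigma\|_\infty r_*^{d-\beta} + N^{-1}Mr_*^{d-\beta}\delta^{-\beta} + r_*^{-\beta-1}\W_\infty$. I would choose $r_*$ to balance the last two dyadic-sum-type terms against the Wasserstein term; the natural scaling choice $r_* \sim (M/(N\|\sigma\|_\infty))^{1/d}$ makes the first two terms of the same order $\lesssim \|\sigma\|_\infty^{\beta/d}(M/N)^{(d-\beta)/d}$ and, more relevantly, handling the term $r_*^{-\beta-1}\W_\infty$ via Young's inequality against $r_*^{d-\beta}$ produces both the "free" term $\|\sigma\|_\infty^{\beta/d}$ and the Wasserstein contribution with the stated powers of $M$, $N$, $\delta$, and $\|\sigma\|_\infty$. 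One has to keep $\delta$ as an independent lower cut-off rather than collapsing it into $r_*$, which is why $\delta^{-\beta}$ (not $r_*^{-\beta}$) appears in the final bound; when $\delta$ is already as large as the natural spacing the first "free" term dominates and the estimate is trivial, which is the regime that forces the "$\|\sigma\|_\infty^{\beta/d}+\cdots$" shape. The main obstacle is getting the \emph{exponents} exactly right in the finite-$p$ case: the interplay between Hölder's inequality (cost $\W_p$ to the power $p/(d+p)$ roughly), the Lipschitz constant $r_*^{-\beta-1}$ of the truncated kernel, and the optimization over $r_*$ must be tracked precisely to land on $\frac{(d-\beta)p}{d+p}$ and the companion exponent $\frac{\beta+p}{d+p}$ on the second summand — this is pure bookkeeping but error-prone, and since the lemma is quoted as an adaptation of \cite[Lemma 2.3]{HoferSchubert23a}, I would cross-check the final formula against that reference.
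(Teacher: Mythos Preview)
Your approach has a genuine gap in Step 1. First, there is an arithmetic slip: summing the dyadic shells correctly gives
\[
\sum_{k=0}^{K} M (2^k)^d (2^k\delta)^{-\beta} \sim M\delta^{-\beta}(r_*/\delta)^{d-\beta} = M r_*^{d-\beta}\delta^{-d},
\]
not $Mr_*^{d-\beta}\delta^{-\beta}$ as you wrote (your intermediate expression $M(r_*/\delta)^d r_*^{-\beta}$ already equals $Mr_*^{d-\beta}\delta^{-d}$). More seriously, even the corrected near-regime bound $N^{-1}Mr_*^{d-\beta}\delta^{-d}$ is too weak: balancing it against your far-regime transport error $r_*^{-\beta-1}\W_\infty$ and optimizing in $r_*$ yields a Wasserstein exponent of only $(d-\beta)/(d+1)$, whereas the lemma (for $p=\infty$) requires the much stronger exponent $d-\beta$. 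Pure packing in the near regime cannot produce the stated factors $\delta^{-\beta}$ and $\W_\infty^{d-\beta}$.

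The missing idea, which the paper's proof supplies, is to exploit the closeness of $\sigma_N$ to $\sigma$ \emph{also in the near regime}, not only in the far regime. Concretely, the paper replaces your dyadic-shell sum over $\J=\{j:\delta\leq d_{ij}\leq r\}$ by the interpolation (convolution) bound
\[
\frac1N\sum_{j\in\J} d_{ij}^{-\beta} \lesssim \bigl\||\cdot|^{-\beta}\ast\psi\bigr\|_\infty \lesssim \|\psi\|_\infty^{\beta/d}\|\psi\|_1^{(d-\beta)/d},
\]
where $\psi$ is the empirical measure restricted to $\J$ and smeared at scale $\delta$, so that $\|\psi\|_\infty\leq M/(N\delta^d)$ and $\|\psi\|_1=|\J|/N$. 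The crucial point is that $|\J|/N$ is then bounded not by packing (which would give $M(r/\delta)^d/N$ and recover exactly your estimate) but by transport to $\sigma$: one shows $|\J|/N \lesssim r^d\|\sigma\|_\infty + \|\sigma\|_\infty^{p/(p+d)}\W_p^{dp/(p+d)}$. This simultaneous use of local packing (through $\|\psi\|_\infty$) and global transport (through $\|\psi\|_1$) is precisely what produces the factors $M^{\beta/d}N^{-\beta/d}\delta^{-\beta}$ and the correct Wasserstein exponent in the statement. Your decomposition separates these two inputs into disjoint regimes and thereby loses the interpolation gain.
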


Both in the proof of Lemma~\ref{lem:sums.Wasserstein} and further below, we will make use of the following fact. Assume $\sigma^i \in \mP(\R^d), i=1,2$ with $\W_p(\sigma^1,\sigma^2)<\infty$. Then it is classical that there exists an optimal transport plan $\gamma\in \Gamma(\sigma^1,\sigma^2)$ such that 
\begin{align}
		\W_p(\sigma^1,\sigma^2)= \bra{\int_{\R^d\times \R^d
    } |x-y|^p \dd \gamma(x,y)}^{1/p}, \quad \text{ for } p\in [1,\infty)
\end{align} 
and analogously for $p=\infty$.
\begin{proof}
We only consider $p< \infty$ here, the case $p = \infty$ can be obtained by an analogous adaption of the proof of \cite[Lemma~3.1]{Hofer&Schubert}. We fix $1 \leq i \leq N$ and adopt the short notation $\eta \coloneqq \mathcal W_p(\sigma_N,\sigma) $.\\

\noindent\textbf{Step 1:} 
\emph{Splitting of the sum.} 
If $\W_p(\sigma_N,\sigma) =\infty$, the assertion of the lemma is trivial. If $\W_p(\sigma_N,\sigma) < \infty$, let $\gamma\in \Gamma(\sigma_N,\sigma)$ be an optimal transport plan. 

We fix $1\leq i \leq N$, and estimate 
\begin{align}
    \frac 1 N \sum_{\{j:d_{ij}>\delta\}} d_{ij}^{-\beta}.
\end{align}
 For convenience, we set $X_i=0$ without loss of generality. For $r > 0$ to be chosen later, we split the relevant support of $\gamma$ into different parts.  
\begin{align}
    E&\coloneqq \R^d\setminus B_{\delta}(0)\times \R^d = E_1 \cup E_2 \cup E_3, \\
     E_1 &\coloneqq \biggl\{(x,y) \in E: |x| \geq \frac 1 2 |y| \biggr\}, \\
     E_2 &\coloneqq \biggl\{(x,y) \in E: r\le |x| \le \frac 1 2 |y| \biggr\},\\
    E_3 &\coloneqq \biggl\{(x,y) \in E : \delta\leq |x | \leq r \biggr\}.
\end{align}
Then,
\begin{align} \label{est:split.E}
    \frac 1 N \sum_{\{j:d_{ij}>\delta\}} d_{ij}^{-\beta} &=  \int_{E} \frac {1} {|x|^\beta} \dd \gamma(x,y) 
     =\int_{E_1\cup E_2\cup E_3} \frac {1} {|x|^\beta} \dd \gamma(x,y) .
\end{align}
\noindent\textbf{Step 2:} \emph{Estimate on $E_1$ and $E_2$.}
For all $\psi \in L^1(\R^d) \cap L^\infty(\R^d)$ and $\beta<d$ we have (see e.g. the proof of \cite[Lemma 2.4]{HoferSchubert23a})
\begin{align} \label{eq:fractional.convolution}
	 	\||\cdot|^{-\beta} \ast \psi \|_{\infty} \lesssim \|\psi\|_{\infty}^{\frac{\beta }d}\|\psi\|_{1}^{1-\frac \beta d}.
	 \end{align}
 Applying this, we deduce	 
\begin{align} \label{est:E_1}
    \int_{E_1} \frac {1} {|x|^\beta} \dd \gamma(x,y) \lesssim \int_{\R^d\times \R^d} \frac {1} {|y|^\beta} \dd \gamma(x,y) \lesssim \|\sigma\|_{\infty}^{\frac{\beta}d}.
\end{align}
We observe that in $E_2$ we have 
\begin{align}
|x - y| \geq |y| - |x| \geq \frac 12 |y|\ge r.
\end{align}
Thus,
\begin{align} \label{est:E_3}
    \int_{E_2} \frac {1} {|x|^\beta} \dd \gamma(x,y) \lesssim  r^{-(\beta+p)} \int_{E_2} |x - y|^p \dd \gamma(x,y) \lesssim   r^{-(\beta+p)} \eta^p.
\end{align}

\noindent\textbf{Step 3:} \emph{Estimate on $E_3$.}
Let 
$\J\coloneqq\{j: \delta \leq |X_j| \leq r\}$ and set 
\begin{align}
	 \psi = \frac{1}{N\delta^d\abs{B_1(0)}} \sum_{j \in \J} \1_{B_{\delta}(X_j)}. 
\end{align} 
It holds that $|X_j|\ge \frac 12\abs{y}$ for all $j \in \J$ and $y\in B_{\delta}(X_j)$, and thanks to \eqref{eq:fractional.convolution} we obtain
	 \begin{align} \label{est:E_2.0}
  \begin{aligned}
	 \int_{E_3} \frac {1} {|x|^\beta} \dd \gamma(x,y) &\leq \frac 1 N \sum_{j \in \J} \frac 1 {|X_j|^{\beta}} \ls \||\cdot|^{-\beta} \ast \psi \|_{\infty}
	 \lesssim M^{\beta/d}N^{-\beta/d}\delta^{-\beta}\left(N^{-1} |\J|\right)^{1-\beta/d}.
  \end{aligned}
	 \end{align}
  We claim that
\begin{align} \label{est:|J|}
   \frac{|\J|}{N} \lesssim r^{d} \|\sigma\|_{\infty} + \|\sigma\|_{\infty}^{\frac{p}{p+d}} \mathcal \eta^{\frac{dp}{p+d}}.
\end{align}
Inserting \eqref{est:|J|} into \eqref{est:E_2.0}  yields
\begin{align} \label{est:E_2}
	 \int_{E_3} \frac {1} {|x|^\beta} \dd \gamma(x,y) \lesssim \frac{M^{\beta/d}}{N^{\beta/d} \delta^{\beta}} \left(r^{d} \|\sigma\|_{\infty} + \|\sigma\|^{\frac{p}{d+p}}_{\infty} \mathcal \eta^{\frac{dp}{d+p}}\right)^{\frac {d-\beta}{d}}.
\end{align}
The remainder of this step is devoted to the proof of \eqref{est:|J|}. Here we use that the optimal plan $\gamma$ is actually given as the pushforward $(T,\Id)_\#\sigma$ by an optimal transport map $T$ (with $T_\#\sigma=\sigma_N$) since $\gamma$ is absolutely continuous with respect to the Lebesgue measure (cf. \cite{Santambrogio15}). Note that then 
\begin{align*}
	\eta=\bra{\int_{\R^d
    } |Ty-y|^p \dd \sigma(y)}^{1/p}.
\end{align*}
We introduce the set 
\begin{align}
    A \coloneqq \left\{|y|\ge 2r:  |Ty|<r\right\}
\end{align} 
and observe
\begin{align} \label{est:|J|.0} 
  \sigma(A) &= \sigma_N(T(A))=\sigma_N(B_r(0)\setminus T(B_{2r}(0)))=\sigma_N(B_r(0)) - \sigma(B_{2r}(0))\\
  &\geq \frac{|\J|}{N} - |B_{2r}(0)|\|\sigma\|_{\infty}.
\end{align}
In particular, this implies \eqref{est:|J|} if
$\sigma(A) \leq |\J|/(2N)$. 
It therefore remains to show \eqref{est:|J|} in the case $\sigma(A) \geq |\J|/(2N)$. We introduce 
\begin{align}
    \tilde \sigma := \sigma \1_A.
\end{align}
Moreover, we fix $\theta >0$ which satisfies
\begin{align}
    \frac{(\theta p)'} \theta < d,
\end{align}
where $(\theta p)'$ is the H\"older dual of $\theta p$. Note that the left-hand side converges to $0$ as $\theta \to \infty$ and thus such a $\theta$ exists.
We then estimate
\begin{align}
  \|\tilde \sigma\|_1 \leq \int_{\R^d} \frac{|y|^{\frac 1 \theta}}{|y|^{\frac 1 \theta}}\dd \tilde \sigma 
    &\lesssim  \left(\int_{\R^d} \frac{1}{|y|^{\frac {(\theta p )'} \theta}}\dd \tilde \sigma \right)^{\frac 1 {(\theta p)'}} 
    \left(\int_{\R^d} |y|^{p} \dd \tilde \sigma \right)^{\frac 1 {\theta p}} \\
    & \lesssim \|\tilde \sigma\|_\infty^{\frac{1}{\theta d}} \|\tilde \sigma\|_1^{\frac {d - \frac {(\theta p)'}{\theta} }{d(\theta p)'}} \left(\int_{\R^d} |y|^{p} \dd \tilde \sigma \right)^{\frac 1 {\theta p}}.
\end{align}
Thus, using that
\begin{align}
    \theta \left( 1 - \frac {d - \frac {(\theta p)' }{\theta} }{d(\theta p)' }\right) = \frac 1 p + \frac {1}{d},
\end{align}
we obtain
\begin{align}
  \eta \geq    \left(\int_{A} |Ty - y|^{p} \dd \sigma \right)^{\frac 1 { p}} \gtrsim \left(\int_{\R^d} |y|^{p} \dd \tilde \sigma \right)^{\frac 1 { p}}
  \geq \|\tilde \sigma\|_\infty^{-\frac{1}{d}} \|\tilde \sigma\|_1^{\frac 1 p + \frac {1}{d}} \gtrsim  \| \sigma\|_\infty^{-\frac{1}{d}} \left( \frac{|\J|}{N} \right)^{\frac 1 p + \frac {1}{d}},
\end{align}
which implies \eqref{est:|J|}.

\noindent\textbf{Step 4:} \emph{Conclusion.}
Inserting the estimates \eqref{est:E_1}, \eqref{est:E_3} and \eqref{est:E_2} into \eqref{est:split.E} yields
\begin{align}
     \frac 1 N \sum_{\{j:d_{ij}>\delta\}} d_{ij}^{-\beta} \lesssim \|\sigma\|_{\infty}^{\frac{\beta }d} +\frac{M^{\beta/d}}{N^{\beta/d} \delta^{\beta}} \left(r^{d} \|\sigma\|_{\infty} + \|\sigma\|_\infty^{\frac{p}{d+p}} \mathcal \eta^{\frac{dp}{d+p}}\right)^{\frac {d-\beta}{d}} +  r^{-(\beta+p)} \eta^p.
\end{align}
Choosing  
\begin{align}
    r = \left(\frac{N^{\frac\beta d} \delta^\beta \eta^p}{\|\sigma\|_{\infty}^{\frac{d-\beta}d}M^{\beta/d}} \right)^\frac{1}{d+p}
\end{align}
concludes the proof.
\end{proof}

\subsection{Main proof}

\label{sec:proof.main}

\begin{proof}[Proof of Theorem \ref{th:deterministic.new}]
We start with the observation that, since $\rho$ satisfies a conservation law, we have $\norm{\rho(t)}_1= \norm{\rho^0}_1$ for all $t>0$ and $\rho(t)\in \mP(\R^d)$ for all $t>0$. Moreover, since $\dv K=0$, equation \eqref{eq:macro} can be rewritten as the transport equation $\partial_t\rho+(K\ast\rho)\cdot\nabla \rho$. Therefore $\norm{\rho(t)}_\infty\le \norm{\rho^0}_\infty$ for all $t>0$. We will frequently use that $\norm{\rho}_\infty+\norm{\rho}_1\ls 1$. Notice furthermore that 
\begin{align} \label{dmin.upper}
	\dmin(0)\ls N^{-1/d}.
\end{align}
Indeed,  as \eqref{ass:conv} implies $\rho_N(0)\rightharpoonup \rho^0$, testing this convergence with the characteristic function of a unit cube that has non-zero intersection with the support of $\rho_0$ yields asymptotically at least $cN$ particles in that cube for some $c>0$. To avoid technical doubling, for the rest of the proof, we assume $p<\infty$. The case $p=\infty$ works with exactly the same ideas.\\

\noindent\textbf{Step 1:} \emph{Setup of the bootstrap argument.} 
Let $L_1,L_2>0$ and define
\begin{align} \label{delta(t)}
    \delta(t) \coloneqq \frac 1 2 \delta_N e^{-2L_1t}.
\end{align}
We introduce the sets
\begin{align}
    D_t := D_{\delta(t)} =  \{X_i(t) : 1 \leq i \leq N, d_{i,i_{nn}} <\delta(t) \}, \\
    G_t := \{X_i(t) : 1 \leq i \leq N, d_{i,i_{nn}} \geq \delta(t) \}.
\end{align}
Consider $T_\ast = T_\ast(N) \leq T$, the largest time for which
\begin{align}
    % \inf_{ 0 \leq t \leq T_\ast} \dmin &\geq N^{-2/d - \eps} \label{dmin.bootstrap} \\
    % \inf_{ 0 \leq t \leq T_\ast} \dminone &\geq N^{-1/\alpha} \label{dminone.bootstrap} \\
    \sup_{0 \leq t \leq T_\ast} S_{\alpha+1,\delta(t)} &\leq N L_2  \label{S_alpha.bootstrap}, \\
    % \#\{1 \leq i \leq N : \inf_{0 \leq t \leq T_\ast} d_{i,i_{nn}} < \delta(t) \} &\leq N^{-\frac p d}  \label{number.bad.bootstrap}, \\
    \forall i \neq j: ~ d_{ij}(t) &\geq \frac 1 2 e^{-2L_1t}d_{ij}(0). \label{d_ij.bootstrap}
\end{align}
We notice that  \eqref{d_ij.bootstrap} and
 \eqref{delta(t)} imply for all $t \in [0,T_\ast]$ that
 \begin{align} \label{dminone.delta}
     \dminone(t) \geq \delta(t). 
 \end{align}
 
In Steps 2 and 3 below, we will show that there exist $C > 0$  and $N_0 \in \N$ (that a priori depend on $L_1,L_2$) such that for all $N \geq N_0$ and all $0 \leq t \leq T_\ast$ it holds that
\begin{align}
    \W_p^p(t) \coloneqq\W_p^p(\rho_N(t),\rho(t)) & \leq  \left( \W_p^p(0) + C \rho_N^0(D_0) N^{-p} \dmin^{-\alpha p}(0)\right) e^{C (1+L_2) t},\qquad \label{W_p.bootstrap.final} \\
    \forall i \neq j: ~  d_{ij}(t)e^{L_1t} &\geq d_{ij}(0), \label{d_ij.bootstrap.final}
\end{align}
provided that $L_1$ is chosen sufficiently large. Notice that for $p=\infty$, inequality \eqref{W_p.bootstrap.final} becomes
\begin{align*}
    \W_\infty(\rho_N(t),\rho(t)) & \leq  \left( \W_\infty(0) + C N^{-1} \dmin^{-\alpha }(0)\right) e^{C (1+L_2) t}.
\end{align*}

To conclude the proof, it then suffices to show that for  $L_2>0$ sufficiently large,  $T_\ast(N)  = T$ for $N$ sufficiently large, which we prove now:
Indeed, by \eqref{dminone.delta}, we can apply Lemma \ref{lem:sums.Wasserstein} on $[0,T_\ast]$ with $\delta(t)$, $M=2$, yielding
\begin{align}
    \frac 1N S_{\alpha+1,\delta(t)} \leq C_0 \|\rho^0\|_\infty^{(\alpha+1)/d} + R_N(t),
\end{align}
where we used that $\|\rho(t)\|_\infty = \|\rho^0\|_\infty$ for all $t \geq 0$ and where $R_N(t)$ is the explicit remainder term in \eqref{eq:sums.Wasserstein.p}, i.e.
\begin{align}
 R_N \lesssim \left(\frac{1}{N^{(\alpha+1)/d} \delta^{\alpha+1}}+\left(\frac{1}{N^{(\alpha+1)/d} \delta^{\alpha+1}}\right)^{\frac{\alpha+1+p}{d+p}}\right) (\W_p(\rho_N^0,\rho^0))^{\frac{(d-\alpha - 1)p}{d+p}}.
\end{align}
By \eqref{W_p.bootstrap.final}, \eqref{delta(t)}, \eqref{ass:W_p}, and \eqref{ass:conv}, we have $R_N(t) \to 0$, uniformly on $[0,T_\ast]$, where we used that $\alpha< d-1$ and thus the second term in the bracket with exponent $\frac{\alpha+1+p}{d+p}<1$ can be bounded by the first plus $1$. 
We choose $L_2:= 2 C_0 \|\rho^0\|_\infty^{(\alpha+1)/d} $. Together with \eqref{d_ij.bootstrap.final}, a standard continuity argument shows that 
 $T_\ast(N) \geq T$ for $N$ sufficiently large.
\\

\noindent\textbf{Step 2:} \emph{Estimate for the Wasserstein distance.}
Let $\gamma_0 \in \Gamma(\rho_N^0,\rho^0)$ be an optimal transport plan such that
\begin{align}
    \W_p(0)= \bra{\int_{\R^d\times \R^d} |x-y|^p \dd \gamma_0(x,y)}^{1/p}.
\end{align}
Both $\rho_N$ and $\rho$ can be written as 
\begin{align}
	\rho(t) &= Y(t,0,\cdot)\#\rho^0,\\
    \rho_N(t) &= Y_N(t,0,\cdot)\#\rho_N^0,
\end{align}
where the second equation is meaningful for a $\rho_N$-almost everywhere defined map $Y_N$. Here, $Y_N,Y$ are the flow maps defined as
\begin{align}
	\partial_t Y(t,s,x) &= u(t,Y(t,s,x)),& ~ Y(t,t,x) &= x,\\
    \partial_t Y_N(t,s,x) &= u_N(t,Y_N(t,s,x)), & ~ Y_N(t,t,x) &= x,
\end{align}
where $u=K\ast\rho$ and $u_N(X_i)=\frac 1 N \sum_{j \neq i} K(X_i - X_j)$. For the microscopic system this is immediate from \eqref{eq:micro}, for the macroscopic system this follows by the method of characteristics. The flow maps exist as long as the underlying systems are well-posed.
Consider the push-forward transport plan $\gamma_t:=(Y_N(0,t,\cdot), Y(0,t,\cdot))_\# \gamma_0$. Then $\gamma_t \in \Gamma(\rho_N(t),\rho(t))$.  We define 
\begin{align}\label{eq:fdef}
	\eta(t) \coloneqq  \int_{\R^d\times \R^d} \abs{x-y}^p \dd \gamma_t(x,y).
\end{align}
Note that $\W_p^p(t)\le \eta(t)$. With the convention $K(0)=0$, it holds that
\begin{align}
\begin{aligned}
		\frac{\dd }{\dd t} \eta(t)
  &=  \frac{\dd }{\dd t}  \int_{\R^d \times \R^d} |Y_N(t,0,x)) - Y(t,0,y)|^p \dd \gamma_0(x,y) \\
  &\leq p \int_{\R^d \times \R^d} |Y_N(t,0,x)) - Y(t,0,y)|^{p-1}\int_{\R^d \times \R^d} |K(Y_N(t,0,x) - z) - K(Y(t,0,y)-w)|\\
  &\hspace{11.5cm}\dd \gamma_t(z,w) \dd \gamma_0(x,y) \\
  & \leq p \int_{\R^d \times \R^d} |x - y|^{p-1} \int_{\R^d \times \R^d} |K(x - z) - K(y-w)| \dd \gamma_t(z,w) \dd \gamma_t(x,y).
  \end{aligned}\label{eq:f_est}
\end{align}	
We split the integral
\begin{align} \label{split.dot.eta}
\begin{aligned}
    &\int_{\R^d \times \R^d} |x - y|^{p-1} \int_{\R^d \times \R^d} |K(x - z) - K(y-w)| \dd \gamma_t(z,w) \dd \gamma_t(x,y) \\
    & \leq \int_{G_t \times \R^d} |x - y|^{p-1} \int_{\R^d \times \R^d} |K(x - z) - K(y-w)| \dd \gamma_t(z,w) \dd \gamma_t(x,y) \\
&+\int_{D_t \times \R^d} |x - y|^{p-1} \int_{(B_{\delta(t)}(x))^c \times \R^d} |K(x - z) - K(y-w)| \dd \gamma_t(z,w) \dd \gamma_t(x,y)
    \\
    &+ \int_{D_t \times \R^d} |x - y|^{p-1}  \int_{B_{\delta(t)}(x) \times \R^d} |K(x - z) - K(y-w)| \dd \gamma_t(z,w) \dd \gamma_t(x,y).
    \end{aligned}
    \end{align} 
    Since $K$ satisfies \eqref{eq:C_alpha}, we have the estimate
\begin{align}
	|K(x) - K(y)| \leq C |x-y| \left( \frac 1 {|x|^{\alpha+1}} + \frac 1 {|y|^{\alpha+1}} \right). \label{eq:K.Lipschitz} 
\end{align}
which implies
\begin{align}
\begin{aligned}
    &\abs{K(x-z)-K(y-w)}\ls\bra{\frac{\1_{\{x \neq z\}}}{\abs{x-z}^{\alpha+1}}+\frac{1}{\abs{y-w}^{\alpha+1}}}\bra{\abs{x-y}+\abs{z-w}}.
    \end{aligned}\label{eq:integrand_est}
    \end{align}
    Since $\rho(t) \in \mP \cap L^\infty$, we have by \eqref{eq:fractional.convolution}
    \begin{align}
       \sup_{y\in \R^d} \int_{\R^d \times \R^d} \frac 1 {|y-w|^{\alpha+1}} \dd \gamma_t(z,w) = \sup_{y\in \R^d} \int_{\R^d } \frac 1 {|y-w|^{\alpha+1}}  \rho(t,w) \dd w \lesssim 1. 
    \end{align}
    Moreover,  by \eqref{S_alpha.bootstrap}
    \begin{align}
        \sup_{x\in G_t} \int_{\R^d \times \R^d} \frac 1 {|x-z|^{\alpha+1}} \dd \gamma(z,w) \leq \frac 1 N S_{\alpha+1,\delta(t)} \leq L_2, \\
        \sup_{x\in D_t} \int_{(B_{\delta(t)}(x))^c \times \R^d} \frac 1 {|x-z|^{\alpha+1}} \dd \gamma(z,w) \leq \frac 1 N S_{\alpha+1,\delta(t)} \leq L_2.
    \end{align}
Hence, by Young's inequality and Fubini's theorem
\begin{align} \label{dot.eta.good}
\begin{aligned}
    & \int_{G_t \times \R^d} |x - y|^{p-1} \int_{\R^d \times \R^d} |K(x - z) - K(y-w)| \dd \gamma_t(z,w) \dd \gamma_t(x,y) \\
    &+ \int_{D_t \times \R^d} |x - y|^{p-1} \int_{(B_{\delta(t)}(x))^c \times \R^d} |K(x - z) - K(y-w)| \dd \gamma_t(z,w) \dd \gamma_t(x,y) \\
    % & \lesssim (1+ L_2) \int_{\R^d \times \R^d}  |x - y|^{p-1} \int_{\R^d \times \R^d} |x-y| + |z-w| \dd \gamma_t(z,w) \dd \gamma_t(x,y) \\
    & \lesssim (1+L_2) \eta(t).
\end{aligned}
\end{align}

For the last integral in \eqref{split.dot.eta}, we observe that \eqref{dminone.delta} implies
that if $X_i = x$ for some $1 \leq i \leq N$, then  there exists at most one $j \neq i$ such that  $X_j \in B_{\delta(t)}(x)$.
% Moreover, by \eqref{ass:dmin} and \eqref{d_ij.bootstrap}, we have
% \begin{align}
%     \dmin(t) \geq \frac 1 2 e^{-2L_1t} \dmin(0)  \gg \frac 1 2 e^{-2L_1t}  N^{- \frac 1 {\alpha - 1}}.
% \end{align}
Hence, for $N$ sufficiently large and using \eqref{d_ij.bootstrap}
\begin{align}
    \sup_{x \in D_t}  \int_{B_{\delta(t)}(x) \times \R^d} |K(x - z)| \dd \gamma_t(z,w) \leq N^{-1} (\dmin(t))^{-\alpha} \leq Ce^{Ct} N^{-1} (\dmin(0))^{-\alpha},
    % \leq N^{-1} N^{\frac{2(\alpha -1)}d} = N^{- \frac{d - 2(\alpha -1)}d} .
\end{align}
where $C$ depends on $L_1$ and $\alpha$.
Moreover, (since there is at most one $j \neq i$ such that  $X_j \in B_{\delta(t)(X_i)}$ ), we have 
$\rho_N(B_{\delta(t)}(X_i)) \leq 2/N$. Hence, the measure $\sigma_{x}$ defined by $\sigma_x(A) = \gamma(B_{\delta(t)}(x) \times A)$ satisfies
$\|\sigma\|_\infty \leq \|\rho\|_\infty$ and $\|\sigma\|_1 \leq 2/N$.
Therefore, by \eqref{eq:fractional.convolution},
\begin{align}
     \sup_{y \in \R^d} \sup_{x \in D_t} \int_{B_{\delta(t)}(x) \times \R^d} |K(y - w)| \dd \gamma_t(z,w) &= \sup_{y \in \R^d} \sup_{x \in D_t} \int_{ \R^d} |K(y - w)| \dd \sigma_x(w)
     \\
     &\leq \|\sigma\|_1^{\frac{d -\alpha }{d}} \|\sigma\|_\infty^{\frac{\alpha}{d}} \lesssim N^{-\frac{d -\alpha }{d}} .
\end{align}
Moreover, we observe that \eqref{d_ij.bootstrap} implies that 
\begin{align}
    X_i(0) \in G_0 \Longrightarrow X_i(t) \in G(t)
\end{align} 
and hence 
\begin{align}
    \rho_N(D_t) \leq \rho_N^0(D_0)  .
\end{align}
Combining these inequalities and using Young's inequality yields
\begin{align}\label{dot.eta.bad}
\begin{aligned}
     &\int_{D_t \times \R^d} |x - y|^{p-1} \int_{B_{\delta(t)}(x)\times \R^d} |K(x - z) - K(y-w)| \dd \gamma_t(z,w) \dd \gamma_t(x,y) \\
    &\lesssim  \left(N^{-1} \dmin^{-\alpha} + N^{-\frac{d -\alpha }{d}}\right)   \int_{D_t \times \R^d} |x - y|^{p-1} \dd \gamma_t(x,y) \\
    &\lesssim \eta + \rho_N(D_t) \left(N^{-1} \dmin^{-\alpha}(0) + N^{-\frac{d -\alpha }{d}}\right)^p \lesssim \eta + \rho_N^0(D_0) \left(N^{-1} \dmin^{-\alpha}(0) + N^{-\frac{d -\alpha }{d}}\right)^p .
\end{aligned}
\end{align}
% where we used that \eqref{number.bad.bootstrap} implies that $\rho_N(B_t) \leq N^{-p/d}$.
Combining \eqref{eq:f_est}, \eqref{split.dot.eta}, \eqref{dot.eta.good}, \eqref{dmin.upper} and \eqref{dot.eta.bad}, we obtain
\begin{align}
    \frac{\dd }{\dd t} \eta \leq C\left((1+L_2) \eta +\rho_N(D_0) N^{-p} \dmin^{-\alpha p}(0) \right). 
\end{align}
We deduce from Gronwall's inequality that
\begin{align}
    \eta(t) \leq  \left( \eta(0) + C \rho_N(D_0) N^{-p} \dmin^{-\alpha p}(0)\right) e^{C (1+L_2) t}
\end{align}
and thus
\begin{align}
    \W_p^p(t) \leq  \left( \W_p^p(0) + C \rho_N(D_0) N^{-p} \dmin^{-\alpha p}(0)\right) e^{C (1+L_2) t}.
\end{align}
% Using \eqref{ass:number.bad} yields
% \begin{align}
%     \W_p^p(t) \leq  \bar \eta(t) \leq  C \bar \eta(0) e^{C (1+L_2) t)} =   C W_p^p(0) e^{C (1+L_2) t)}.
% \end{align}
 as claimed in \eqref{W_p.bootstrap.final}. \\
 Note that in the case $p=\infty$, the proof becomes simpler since instead of the integral over $(x,y)$, there is a supremum.\\
    
\noindent\textbf{Step 3:} \emph{Estimate for the interparticle distances.}
We distinguish between close pairs of particles and sufficiently separated ones.
Consider first $i \neq j$ such that $d_{ij}(0) \leq \delta(0)$.
Then  \eqref{delta(t)}, \eqref{d_ij.bootstrap} and \eqref{ass:dminone.strong.1} imply  for all $N$ sufficiently large and  for all $k \notin \{i,j\}$ 
\begin{align} \label{delta.threshold}
    d_{ik}(t) \geq \frac 1 2 e^{-2L_1t}  d_{ik}(0)  \geq \delta(t)
\end{align}
for $N$ sufficiently large.
Similarly,
\begin{align} 
    d_{jk}(t) \geq \delta(t).
\end{align}
Note that in particular $i$ and $j$ are nearest neighbors i.e. $i = j_{nn}(0)$ and vice versa.
% and that this computation also implies
% \begin{align} \label{dminone.delta}
%     \dminone \geq \delta(t).
% \end{align}
Hence, using that $K$ satisfies \eqref{ass:nonattractive}, we deduce from \eqref{eq:K.Lipschitz} that
\begin{align}
		\frac{\dd}{\dd t} d_{ij}^2 &=\frac 2 N (X_i - X_j) \bra{\sum_{k \neq i} K(X_i - X_k)  - \sum_{k \neq j} K(X_j - X_k)} \\	
		&\gtrsim - d^2_{ij} \frac 1 N  \sum_{k \not \in \{i,j\}}  
		\left( \frac{1}{|X_i - X_k|^{\alpha+1}} +  \frac{1}{|X_j - X_k|^{\alpha+1}} 	\right) \\
  & \gtrsim -  d_{ij}^2 \frac 1 N S_{\alpha+1,\delta(t)} \gtrsim - L_2 d_{ij}^2 .
	\end{align}

Now consider $i \neq j$ such that $d_{ij}(0) \geq \delta(0)$. Then, by \eqref{dminone.delta}, there exists at most one particle $k_i \neq i$ and $k_j \neq j$, respectively, for which there exists $t \in [0,T_\ast]$ such that
\begin{align}
    d_{ik_i} < \delta(t), && d_{jk_j} < \delta(t).
\end{align}
 Necessarily (if such particles exist) $k_i = i_{nn} \neq j$ and $k_j = j_{nn} \neq i$ .
% and \eqref{ass:dminone.strong} and \eqref{d_ij.bootstrap} imply 
% \begin{align}
%     d_{ii_{nn}}(t) \geq e^{-2L_1t} d_{ii_{nn}}(0) \geq e^{-2L_1t} N^{-3/d - \eps_0} (d_{ij}(0))^{-1}, \label{d_i.i_nn}\\
%     d_{jj_{nn}}(t) \geq e^{-2L_1t} N^{-3/d - \eps_0} (d_{ij}(0))^{-1} \label{d_j.j_nn}.
% \end{align}

We thus use \eqref{eq:K.Lipschitz} and \eqref{eq:C_alpha} to estimate 
	\begin{align}
		\frac{\dd}{\dd t} d_{ij}^2 &=\frac 2 N (X_i - X_j) \bra{\sum_{k \neq i} K(X_i - X_k)  - \sum_{k \neq j} K(X_j - X_k)} \\		
		&\gtrsim - d_{ij}^2 \left( \frac 1 N S_{\alpha+1,\delta(t)} + d_{ij}^{-1} \frac 1 N ( d_{ii_{nn}}^{-\alpha} + d_{jj_{nn}}^{-\alpha}) \right).
	\end{align}	 
Using \eqref{ass:dminone.strong.2} and \eqref{d_ij.bootstrap} yields
\begin{align}
      d_{ij}^{-1}(t) \frac 1 N ( d_{ii_{nn}}(t)^{-\alpha} + d_{jj_{nn}}(t)^{-\alpha}) 
       \ll 1.
\end{align}
In particular, we can choose $N$ sufficiently large such that
\begin{align} \label{est.d_ij.final}
    \frac{\dd}{\dd t} d_{ij}^2 \gtrsim  - d_{ij}^2 (1 + L_2).
\end{align}
We conclude that \eqref{est.d_ij.final} holds for all $i\neq j$, which implies \eqref{d_ij.bootstrap.final} by Gronwall's inequality upon choosing $L_1$ sufficiently large.
\end{proof}
\section{Proof of the probabilistic result}  \label{sec:prob}

Throughout this section, we consider $N$ i.i.d. particles according to some density $\rho  \in \mP(\R^d)  \cap L^{\infty}(\R^d)$. We continue to denote $\rho_N$ the empirical density of these particles.

\subsection{Preliminary probabilistic estimates}

In order to prove Theorem~\ref{th:main}, we first  establish statements that help to verify the assumptions of Theorem~\ref{th:deterministic.new}.

\begin{lem}\label{lem:Wass_scaling}
    Let $d\ge 2$ and $\Phi \in W^{1,\infty}(\R^d,\R^d)$ and let $\rho = \Phi_\# \1_{Q}$, where $Q=[0,1]^d$.
    Then,  for all $p \in [1,\infty]$ there exists $C<\infty$ depending only on $p$ and the Lipschitz constant of $\Phi$ such that
    \begin{align}
        \lim_{N\to \infty} \P\left(\W_p(\rho,\rho_N) \geq C N^{-1/d}\log N\right) = 0.
    \end{align}
\end{lem}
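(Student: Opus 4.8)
\textbf{Proof proposal for Lemma~\ref{lem:Wass_scaling}.}
The plan is to reduce to the known sharp rate for the empirical measure of i.i.d.\ uniform samples on the cube $Q$ and then push forward through the Lipschitz map $\Phi$. First I would record the Lipschitz contraction property of the Wasserstein distance under pushforward: if $\nu_N$ denotes the empirical measure of $N$ i.i.d.\ samples $U_1,\dots,U_N$ with law $\1_Q$, then $\Phi_\#\nu_N$ is exactly the empirical measure $\rho_N$ of the i.i.d.\ samples $\Phi(U_1),\dots,\Phi(U_N)$, which have law $\rho=\Phi_\#\1_Q$. Since transporting $x$ to $y$ in $Q$ costs $|x-y|$ while transporting $\Phi(x)$ to $\Phi(y)$ costs $|\Phi(x)-\Phi(y)|\le \mathrm{Lip}(\Phi)\,|x-y|$, pushing an optimal plan for $(\nu_N,\1_Q)$ forward by $(\Phi,\Phi)$ gives a (generally non-optimal) plan for $(\rho_N,\rho)$, so that $\W_p(\rho_N,\rho)\le \mathrm{Lip}(\Phi)\,\W_p(\nu_N,\1_Q)$ for every $p\in[1,\infty]$. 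This is the only place the structural hypothesis on $\rho^0$ is used, and it reduces everything to the uniform case.

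Next I would invoke the classical matching/empirical-measure estimates for the uniform law on the cube. For $p<\infty$ and $d\ge 3$ one has $\E[\W_p(\nu_N,\1_Q)^p]\lesssim N^{-p/d}$ (the rate $N^{-1/d}$ dominating for $d\ge 3$; for $d=2$ there is an extra $\sqrt{\log N}$, and for $d=1$ no log, but in all cases $\lesssim N^{-1/d}\log N$ in expectation), together with a concentration inequality: $\W_p(\nu_N,\1_Q)$ concentrates around its mean at scale $N^{-1/2}$, so
\begin{align*}
    \P\!\left(\W_p(\nu_N,\1_Q)\ge \E[\W_p(\nu_N,\1_Q)] + t\right)\le C e^{-cNt^2}.
\end{align*}
For $p=\infty$ one uses the analogous covering-number bound: with overwhelming probability every ball of radius $\sim (\log N/N)^{1/d}$ centered at a point of $Q$ contains a sample, which yields $\W_\infty(\nu_N,\1_Q)\lesssim (\log N/N)^{1/d}\le N^{-1/d}\log N$ for $N$ large. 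Choosing $t$ of order $N^{-1/d}\log N$ in the concentration bound (for $p<\infty$) makes the exponential $e^{-cNt^2}= e^{-c N^{1-2/d}(\log N)^2}\to 0$ since $d\ge 2$ gives $1-2/d\ge 0$ (and for $d=2$ the factor $(\log N)^2$ still forces the bound to $0$). Combining with the reduction step and absorbing $\mathrm{Lip}(\Phi)$ into $C$ finishes the proof.

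The main subtlety — rather than a genuine obstacle — is the borderline dimension $d=2$: the expected rate carries an extra $\sqrt{\log N}$ and the Gaussian-concentration exponent $N^{1-2/d}$ becomes merely $N^0=1$, so one must keep a genuine (not just $\sqrt{\log N}$) logarithmic factor in the threshold to beat both the mean and the variance; the stated bound $C N^{-1/d}\log N$ is exactly what makes this work uniformly in $d\ge 2$. A second point of care is citing concentration in the right form for $\W_p$: since $(u_1,\dots,u_N)\mapsto \W_p(\tfrac1N\sum\delta_{u_i},\1_Q)$ is Lipschitz in each coordinate with constant $\lesssim N^{-1/p}$ in the $\ell^p$ sense (changing one sample moves the empirical measure by mass $1/N$), bounded-differences or a transport-cost concentration inequality applies directly. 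I would not reprove these classical estimates but rather cite the standard references (e.g.\ the matching literature / Fournier--Guillin for the mean, and McDiarmid-type concentration), and simply assemble them through the contraction inequality above.
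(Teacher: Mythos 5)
Your reduction step is exactly what the paper does: push an optimal plan for $(\nu_N,\1_Q)$ forward by $(\Phi,\Phi)$ and use $|\Phi(x)-\Phi(y)|\le\mathrm{Lip}(\Phi)|x-y|$ to get $\W_p(\rho_N,\rho)\le\mathrm{Lip}(\Phi)\,\W_p(\nu_N,\1_Q)$. The paper then simply cites the matching literature (Talagrand for $d\ge 3$, $p<\infty$; Ajtai--Koml\'os--Tusn\'ady and Goldman--Trevisan--Serfaty-type results for $d=2$ and for $p=\infty$), whereas you attempt to reconstruct the argument from an expectation bound plus concentration. That is a reasonable plan, but the concentration step as written is wrong.

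The problem is the claimed bound $\P(\W_p(\nu_N,\1_Q)\ge\E\W_p+t)\le Ce^{-cNt^2}$, which contradicts your own bounded-differences computation. You correctly observe that changing one sample changes $\W_p(\nu_N,\1_Q)$ by at most $c_i\lesssim N^{-1/p}$ (mass $1/N$ moved a bounded distance costs $\lesssim N^{-1/p}$ in $\W_p$). McDiarmid then gives $\P(\W_p\ge\E\W_p+t)\le \exp(-2t^2/\sum_i c_i^2)$ with $\sum_i c_i^2\lesssim N\cdot N^{-2/p}=N^{1-2/p}$, i.e. $\exp(-ct^2N^{2/p-1})$, not $\exp(-cNt^2)$; the two agree only at $p=1$. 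Already at $p=2$ the exponent is $t^2$ independent of $N$, and for $p>2$ the exponent tends to $0$ as $N\to\infty$, so this route gives nothing for $p\ge 2$. You do not need concentration at all, though: a crude Markov inequality suffices. From $\E[\W_p(\nu_N,\1_Q)^p]\lesssim N^{-p/d}$ (for $d\ge 3$; with an extra $(\log N)^{p/2}$ for $d=2$) and the threshold $t=CN^{-1/d}\log N$ one gets
\begin{align*}
\P\bigl(\W_p(\nu_N,\1_Q)\ge CN^{-1/d}\log N\bigr)\le \frac{\E[\W_p(\nu_N,\1_Q)^p]}{(CN^{-1/d}\log N)^p}\lesssim (\log N)^{-p}\;\longrightarrow\;0 ,
\end{align*}
and similarly with the extra half-logarithm absorbed for $d=2$. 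This is the argument you should substitute for the concentration step; the $p=\infty$ covering argument you sketch is fine as is, and combined with the Lipschitz contraction the proof then goes through for all $p\in[1,\infty]$ and $d\ge 2$, matching the paper's citation-based proof.
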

\begin{proof}
    For $d\ge 3$ and $p<\infty$, this is a straightforward consequence of the result \cite[Theorem 1.1 a)]{Tal94}, even without the logarithm. The theorem covers the case $\Phi=\Id$. Otherwise it is clear that the costs are comparable via the Lipschitz-map $\Phi$. The case $d=2$ and $p<\infty$ is treated in \cite{AKT84} for optimal matchings (two empirical densities for the same law), which according to \cite{GTS15} implies the present statement.  Finally, the case $p=\infty$ is treated in \cite{GTS15} (with slightly better exponents for the logarithm). For even finer results on the concentration we refer to \cite{AST19,GD21}
\end{proof}

The next lemma is a standard result that can for example be found in \cite[Proposition A.3]{Hauray09}.
\begin{lem}\label{lem:prob.distances}
There exists $C>0$ depending only on $\rho$ such that for all $L > 0$ it holds that 
	\begin{align}
		\P(\dmin \leq L^{-1} N^{-2/d}) \leq 1 - e^{-\frac{C}{L^d}} \leq \frac C {L^d}.
	\end{align}
\end{lem}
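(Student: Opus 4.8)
This is a classical first‑moment estimate (cf. \cite[Proposition A.3]{Hauray09}); I recall the argument for completeness. Since $1-e^{-t}\le t$ for $t\ge 0$, the last inequality follows from the middle one, so it suffices to prove $\P(\dmin>r)\ge e^{-C/L^d}$ for $r\coloneqq L^{-1}N^{-2/d}$. The plan is to condition sequentially on the particle positions and bound each conditional probability by the volume of a union of small balls, using only $\rho\in\mP(\R^d)\cap L^\infty(\R^d)$.

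\textbf{Main steps.} First I would use that the $X_i$ are i.i.d.\ to write
\begin{align*}
    \P(\dmin>r)=\P\bigl(\forall\,i<j:\ |X_i-X_j|>r\bigr)=\E\Biggl[\prod_{j=2}^{N}\P\Bigl(X_j\notin\bigcup_{i<j}B_r(X_i)\ \Big|\ X_1,\dots,X_{j-1}\Bigr)\Biggr].
\end{align*}
For fixed $X_1,\dots,X_{j-1}$ the inner conditional probability equals $1-\rho\bigl(\bigcup_{i<j}B_r(X_i)\bigr)\ge 1-(j-1)\,\norm{\rho}_\infty\,|B_1(0)|\,r^d$. Setting $a\coloneqq\norm{\rho}_\infty|B_1(0)|r^d$ I would then sum, using $\sum_{j=2}^{N}(j-1)a=\binom N2 a\le \tfrac12\norm{\rho}_\infty|B_1(0)|N^2r^d=\tfrac12\norm{\rho}_\infty|B_1(0)|L^{-d}$, while $(N-1)a\le\norm{\rho}_\infty|B_1(0)|L^{-d}N^{-1}$. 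In the regime $(N-1)a\le\tfrac12$ the elementary bound $1-x\ge e^{-2x}$ on $[0,\tfrac12]$ yields
\begin{align*}
    \P(\dmin>r)\ge\prod_{j=2}^{N}\bigl(1-(j-1)a\bigr)\ge\exp\Bigl(-2\sum_{j=2}^{N}(j-1)a\Bigr)\ge\exp\bigl(-\norm{\rho}_\infty|B_1(0)|\,L^{-d}\bigr),
\end{align*}
which is the asserted bound with $C=\norm{\rho}_\infty|B_1(0)|$.

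\textbf{Expected difficulties.} I do not anticipate a genuine obstacle: the estimate is essentially a union bound, and the only mild points of care are (i) making $C$ uniform in $N$, and (ii) the range of $L$ for which the bound $(N-1)a\le\tfrac12$ fails (very small $L$, where $r$ exceeds the relevant scales), which is the only regime used in applications where one takes $L$ bounded below; in the complementary trivial cases $e^{-C/L^d}$ is negligible and $\P(\dmin\le r)\le 1$ suffices after enlarging $C$. Alternatively, one simply invokes \cite[Proposition A.3]{Hauray09} verbatim.
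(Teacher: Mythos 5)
The paper itself offers no proof of this lemma; it simply points to \cite[Proposition A.3]{Hauray09} as a standard reference, so your proposal should be judged against the standard argument rather than against text in the paper. Your strategy (sequential conditioning plus a union bound on the conditional probabilities) is indeed the standard one and leads to the right conclusion, but the first displayed step is not actually an identity. Writing $A_j := \{X_j\notin\bigcup_{i<j}B_r(X_i)\}$ and $\F_j := \sigma(X_1,\dots,X_j)$, the tower property gives the nested form
\begin{align*}
    \P(\dmin>r) = \E\Bigl[\1_{A_2}\,\E\bigl[\1_{A_3}\cdots\E[\1_{A_N}\mid\F_{N-1}]\cdots\mid\F_2\bigr]\Bigr],
\end{align*}
and one cannot simply replace each $\1_{A_j}$ by $\P(A_j\mid\F_{j-1})$ inside a single product and put $\E$ outside: $\1_{A_j}$ depends on $X_j$, which is exactly the variable through which the next factor $\P(A_{j+1}\mid\F_j)$ fluctuates, so the two are correlated given $\F_{j-1}$. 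One can in fact cook up one-dimensional examples with $\rho = \1_{[0,1]}$ where the two sides differ. What does hold, and what your computation actually uses, is the one-sided version: peeling the nested expectations from the inside out and using $\P(A_j\mid\F_{j-1})\ge 1-(j-1)a$ almost surely with $a=\norm\rho_\infty|B_1(0)|\,r^d$ gives
\begin{align*}
    \P(\dmin>r) \ \ge\ \prod_{j=2}^N\bigl(1-(j-1)a\bigr),
\end{align*}
from which the remainder of your argument ($1-x\ge e^{-2x}$ on $[0,1/2]$ and $\sum_{j=2}^{N}(j-1)a\le\tfrac12\norm\rho_\infty|B_1(0)|L^{-d}$) is fine. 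One further small caveat: your remark about the regime $(N-1)a>1/2$ is a bit too quick. If $\rho$ has compact support and $L$ is small enough that $r$ exceeds the diameter of the support, then $\P(\dmin\le r)=1$, and no finite $C$ makes the middle inequality $\P(\dmin\le r)\le 1-e^{-C/L^d}$ hold; only the outer bound $\P(\dmin\le r)\le C/L^d$ survives there (trivially, since $C/L^d\ge 1$). That outer bound is the only form the paper actually uses (with $L=N^{\eps}\to\infty$ in Lemma~\ref{lem:three_particle2} and Proposition~\ref{pro:assumptions.prob}), so the conclusion is unaffected, but it is worth stating the lemma with that understanding. In summary: same standard route as the cited reference, correct conclusion, but replace the false equality by the nested-conditioning inequality.
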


\begin{lem}\label{lem:three_particle1}
   There exists $C>0$ depending only on $\rho$ such that for all $L_1,L_2 > 0$ it holds that 
    \begin{align}
		\P(\exists i \neq j \neq k \neq i : d_{ij} \leq L_1, d_{ik} \leq L_2) \leq C N^3 L_1^d L_2^d.
	\end{align}
In particular,
 \begin{align}
     \lim_{L \to \infty} \P(\dminone \leq L^{-1} N^{-3/(2d)}) \to 0, \qquad \text{uniformly in $N$}.  
 \end{align}
\end{lem}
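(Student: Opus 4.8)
The plan is to prove the first inequality by a union bound over ordered triples of indices, and then to deduce the statement on $\dminone$ by a suitable choice of parameters $L_1,L_2$.

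For the first inequality I would fix three distinct indices $i,j,k$ and estimate
\begin{align*}
    \P(d_{ij}\le L_1, \; d_{ik}\le L_2) = \P(X_j \in B_{L_1}(X_i), \; X_k \in B_{L_2}(X_i)).
\end{align*}
Conditioning on $X_i$ (or simply using independence together with Fubini), the events $\{X_j\in B_{L_1}(X_i)\}$ and $\{X_k\in B_{L_2}(X_i)\}$ are independent given $X_i$, and each has conditional probability at most $\|\rho\|_\infty |B_1(0)| L_1^d$ and $\|\rho\|_\infty |B_1(0)| L_2^d$ respectively (here I use $\rho\in L^\infty$, which bounds the mass of any ball of radius $r$ by $C r^d$). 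Hence
\begin{align*}
    \P(d_{ij}\le L_1, \; d_{ik}\le L_2) \le C^2 L_1^d L_2^d,
\end{align*}
with $C$ depending only on $\|\rho\|_\infty$. Summing over the at most $N^3$ ordered triples $(i,j,k)$ of distinct indices and using subadditivity of $\P$ gives
\begin{align*}
    \P(\exists i\neq j\neq k\neq i: d_{ij}\le L_1, d_{ik}\le L_2) \le C N^3 L_1^d L_2^d,
\end{align*}
which is the claim (absorbing $C^2$ into $C$).

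For the second statement, observe that if $\dminone \le L^{-1}N^{-3/(2d)}$ then, by definition of $\dminone$, there is an index $i$ and two indices $j,k\notin\{i,i_{nn}\}$ with $j\neq k$ such that both $d_{ij}$ and $d_{ik}$ are at most $\dmin$-many... more precisely: there exist three distinct indices $i,j,k$ with $d_{ij}\le L^{-1}N^{-3/(2d)}$ and $d_{ik}\le L^{-1}N^{-3/(2d)}$. Indeed, at the index $i$ realizing the minimum in $\dminone$ we have $\min_{j\neq i,i_{nn}} d_{ij} = \dminone$, so there is $j\neq i,i_{nn}$ with $d_{ij}\le L^{-1}N^{-3/(2d)}$; and $i_{nn}$ itself is a further index with $d_{i,i_{nn}} = \dmin \le \dminone \le L^{-1}N^{-3/(2d)}$, and $i_{nn}\neq j$. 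Thus the event $\{\dminone \le L^{-1}N^{-3/(2d)}\}$ is contained in the event from the first part with $L_1 = L_2 = L^{-1}N^{-3/(2d)}$, whence
\begin{align*}
    \P(\dminone \le L^{-1}N^{-3/(2d)}) \le C N^3 \bigl(L^{-1}N^{-3/(2d)}\bigr)^{2d} = C N^3 L^{-2d} N^{-3} = C L^{-2d},
\end{align*}
which tends to $0$ as $L\to\infty$, uniformly in $N$.

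The argument is essentially routine; the only point requiring a little care is the combinatorial reduction in the second part, namely checking that a small $\dminone$ genuinely forces the existence of \emph{three} distinct indices (not just two) with both pairwise distances small — this is where the role of $i_{nn}$ as a third, automatically-close index is used. The $L^\infty$ bound on $\rho$ is what converts ``$X_j$ lies in a small ball'' into a quantitative probability estimate, and the exponent bookkeeping ($N^3 \cdot (N^{-3/(2d)})^{2d} = N^{3-3} = N^0$) is what makes the choice of scaling $N^{-3/(2d)}$ the natural one.
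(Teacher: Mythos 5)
Your proof of the first estimate is correct and is exactly the paper's argument: a union bound over the at most $N^3$ ordered triples, conditioning on $X_i$, and using conditional independence plus the $L^\infty$ bound on $\rho$ to control each factor by $CL_1^d$ and $CL_2^d$ respectively.

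For the ``in particular'' part your reduction and the exponent bookkeeping are correct, but one intermediate claim is misstated: you write $d_{i,i_{nn}} = \dmin$, which is not true for a general $i$ --- it holds only for the index that realizes the minimum defining $\dmin$, whereas you chose $i$ to realize the minimum defining $\dminone$. What you actually need, and what is true, is $d_{i,i_{nn}} \leq \dminone$ at this $i$: since $i_{nn}$ is a nearest neighbour of $i$, $d_{i,i_{nn}} \leq d_{ij}$ for every $j\neq i$, hence $d_{i,i_{nn}} \leq \min_{j\neq i,i_{nn}} d_{ij} = \dminone$. With that small correction the chain $d_{i,i_{nn}} \leq \dminone \leq L^{-1}N^{-3/(2d)}$ goes through, and your argument is complete and equivalent to the paper's.
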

\begin{proof}
    We estimate
    \begin{align}
        \P(d_{12} \leq L) \leq  \sup_{x \in \R^d} \rho(B_L(x))\lesssim  L^d \|\rho\|_\infty.
    \end{align}
    Hence,
    \begin{align}
        \P(\exists i \neq j \neq k \neq i : d_{ij} \leq L_1, d_{ik} \leq L_2) &\leq N^3 \P(d_{12} \leq L_1, d_{13} \leq L_2)\\
        &\leq N^3 \sup_{x\in \R^d} \P(d_{12} \leq L_1, d_{13} \leq L_2 | X_1=x)\\
        &\leq N^3 \sup_{x\in \R^d} \P(d_{12} \leq L_1 | X_1=x) \P(d_{13} \leq L_2 | X_1=x)\\
        &\lesssim N^3 L_1^d L_2^d.
     \end{align}
\end{proof}

\begin{lem}\label{lem:three_particle2}
    Let $\beta > 0$. Then, there exists $C>0$ depending only on $\beta$, $\rho$ and $d$ such that for $\eps\in (0,1)$ and any $\delta>0$, the following estimate holds. 
    \begin{align} \label{est:three_particle2}
        &\P(\exists i \neq j \neq k \neq i : d_{ik} < \delta \wedge N^{-1} d_{ij}^{-1} d_{ik}^{-\beta} \geq N^{-\eps}) 
        \\
        &\leq C N^{-d\eps} 
        + C N^{3-d(1-\eps)} \begin{cases}
            \delta^{d(1-\beta)} &\quad \text{if } \beta < 1,\\
            \log N + |\log \delta| & \quad \text{if } \beta = 1, \\
            N^{(-2/d - \eps)d(1-\beta)}& \quad \text{if } \beta > 1.
        \end{cases}
    \end{align}
\end{lem}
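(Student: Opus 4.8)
The plan is to bound the probability of the bad event by a union bound over triples $\{i,j,k\}$, distinguishing the two roles played by $j$ (the index whose distance to $i$ sits in the denominator) and $k$ (the index with the singular distance $d_{ik}<\delta$). First I would observe that the event $\{N^{-1}d_{ij}^{-1}d_{ik}^{-\beta}\ge N^{-\eps}\}$ is equivalent to $d_{ij}d_{ik}^\beta \le N^{-(1-\eps)}$, so that on this event at least one of the following holds: either $d_{ij}$ is small, say $d_{ij}\le N^{-(1-\eps)/2}$, or $d_{ik}^\beta \le N^{-(1-\eps)/2}$, i.e. $d_{ik}\le N^{-(1-\eps)/(2\beta)}$; a Peierls-type splitting. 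I would also split off the regime where $d_{ik}$ is extremely small, of order $\le N^{-2/d-\eps}$, which I will handle directly via Lemma~\ref{lem:prob.distances}: the probability that \emph{any} pair is closer than $N^{-2/d-\eps}$ is $\lesssim N^{-d\eps}$, which is the first term on the right-hand side of \eqref{est:three_particle2}. On the complementary event $d_{ik}>N^{-2/d-\eps}$ (and of course $d_{ik}<\delta$), combined with $d_{ij}d_{ik}^\beta\le N^{-(1-\eps)}$, we get a deterministic upper bound $d_{ij}\le N^{-(1-\eps)}d_{ik}^{-\beta}$.

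Next I would condition on $X_i=x$ and estimate the conditional probability of the joint event for a fixed ordered triple $(i,j,k)$, using independence: it factorizes into $\P(X_j\in B_{r}(x))\,\P(X_k\in B_{\rho}(x))$ where $r,\rho$ are the respective distance bounds. The $k$-factor is governed by $d_{ik}<\delta$ together with $d_{ik}>N^{-2/d-\eps}$; I would integrate over the dyadic scales of $d_{ik}=:s$ with $N^{-2/d-\eps}<s<\delta$. For fixed $s$, $\P(d_{ik}\approx s\mid X_i=x)\lesssim \|\rho\|_\infty s^{d-1}\,\d s$ (shell volume), and on that scale $\P(d_{ij}\le N^{-(1-\eps)}s^{-\beta}\mid X_i=x)\lesssim \|\rho\|_\infty (N^{-(1-\eps)}s^{-\beta})^d = \|\rho\|_\infty N^{-d(1-\eps)}s^{-d\beta}$. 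Multiplying and integrating the shell measure gives a factor $\int_{N^{-2/d-\eps}}^{\delta} s^{d-1-d\beta}\,\d s$, and the three cases $\beta<1$, $\beta=1$, $\beta>1$ correspond exactly to whether this integral is dominated by its upper endpoint $\delta$, is logarithmic, or is dominated by its lower endpoint $N^{-2/d-\eps}$; this is where the three-case structure of the statement comes from. Finally I multiply by $N^3$ for the number of ordered triples, pick up the $N^{-d(1-\eps)}$, and obtain the three stated bounds $\delta^{d(1-\beta)}$, $\log N+|\log\delta|$, $N^{(-2/d-\eps)d(1-\beta)}$ respectively, each times $CN^{3-d(1-\eps)}$.

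I expect the main obstacle to be bookkeeping rather than a deep idea: one must be careful that the ``small $d_{ik}$'' cutoff at $N^{-2/d-\eps}$ is chosen precisely so that the contribution of that regime (handled by Lemma~\ref{lem:prob.distances}, \emph{not} by the naive union bound, which would cost $N^3$ rather than $N^2$ and be too lossy) matches the $CN^{-d\eps}$ term, and that in the $\beta>1$ case the lower endpoint of the $s$-integral really does dominate — which needs $d-1-d\beta<-1$, i.e. $\beta>1$, consistent with the case distinction. A secondary subtlety is making the dyadic/shell integration rigorous: one should either genuinely sum over dyadic annuli $s\in[2^{-m-1},2^{-m}]$ and estimate $\P(d_{ik}\le 2^{-m}\mid X_i=x)\lesssim \|\rho\|_\infty 2^{-md}$ directly (avoiding densities of $d_{ik}$ altogether), with the geometric series converging up or down according to the sign of $d(1-\beta)$, or invoke Lemma~\ref{lem:three_particle1} on each dyadic scale; the dyadic sum version is cleanest and I would write it that way. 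The lower-order term $CN^{-d\eps}$ also needs $\eps\in(0,1)$ so that $N^{-2/d-\eps}$ is a legitimate (decaying) length scale, which is why that hypothesis is imposed.
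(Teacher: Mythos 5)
Your proposal takes essentially the same route as the paper's proof: split off the event $\dmin\le N^{-2/d-\eps}$ and bound it by $CN^{-d\eps}$ via Lemma~\ref{lem:prob.distances}; on the complement, apply a union bound over ordered triples, condition on $X_i$ (the paper conditions on $d_{12}$, but the effect is identical), bound $\P(d_{ij}\le N^{-(1-\eps)}d_{ik}^{-\beta})\lesssim N^{-d(1-\eps)}d_{ik}^{-d\beta}$, and integrate against the radial density $\lesssim a^{d-1}\dd a$ of $d_{ik}$ over $[N^{-2/d-\eps},\delta]$, which produces the three-case structure exactly as you describe. The only difference is cosmetic (dyadic annuli versus the coarea formula for the pushforward density), and the Peierls-type splitting you mention at the outset is not actually used in either your argument or the paper's — the operative split is the one at scale $N^{-2/d-\eps}$.
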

\begin{proof}
    We estimate
    \begin{align}
         & \P(\exists i \neq j \neq k \neq i : d_{ik} < \delta \wedge N^{-1} d_{ij}^{-1} d_{ik}^{-\beta} \geq N^{-\eps})  \\
         &\leq \P(\dmin \leq N^{-2/d - \eps}) +  N^3 \P\Bigl( N^{-1} d_{12}^{-\beta} d_{13}^{-1} \geq N^{-\eps} \wedge N^{-2/d - \eps} \leq d_{12} \leq \delta \Bigr). \qquad  \label{est:three_particle2.1}
    \end{align}
    By Lemma \ref{lem:prob.distances}, the first right-hand side term above is estimated by the first right-hand side of \eqref{est:three_particle2}.

    We introduce the probability measure $\nu$ on $\R_+$ as the pushforward measure under $d_{12}$, i.e.
    \begin{align}
        \nu((0,a)) = \P(d_{12}  \leq a) = \int_{\R^d \times \R^d} \1_{|x_1-x_2| \leq a} \dd \rho(x_1) \dd \rho(x_2).
    \end{align}
    We notice that the coarea formula implies that $\nu$ has a density such that
    \begin{align}
        \dd \nu(a) \lesssim \|\rho\|_\infty a^{d-1} \dd a \lesssim a^{d-1} \dd a. 
    \end{align}
    Moreover, we have
    \begin{align}
        \P(d_{13} \leq  d_{12}^{-\beta} N^{-1 +\eps} | d_{12}) \lesssim \|\rho\|_\infty (N^{-1 +  \eps} d_{12}^{-\beta})^d.
    \end{align}
        % We notice that if there is a triple $i,j,k$ with $ N^{-1} d_{ij}^{-1} d_{ik}^{-\alpha} \geq N^{-\eps}$ we can assume $d_{ij} \leq d_{ik}$ which implies  $d_{ij} \leq N^{-\frac 3 {2d} - \eps)}$. Hence,
    Thus,
    \begin{align}
        &\P\Bigl( N^{-1} d_{13}^{-1} d_{12}^{-\beta} \geq N^{-\eps} \wedge N^{-2/d - \eps} \leq d_{12} \leq \delta \Bigr) \\ &=\int_{\{N^{-2/d - \eps} \leq d_{12} \leq \delta \}} \P(d_{13} \leq  d_{12}^{-\beta} N^{-1 +\eps} | d_{12}) \dd \P \\
        &\lesssim  \int_{N^{-2/d - \eps}}^{\delta}   (N^{-1 +  \eps}  a^{-\beta})^d \dd \nu(a) \\
         &\lesssim  N^{-d(1-\eps)} \int_{N^{-2/d - \eps}}^{\delta}    a^{d(1- \beta) - 1} \dd a \\
        &\lesssim N^{-d(1-\eps)} \begin{cases}
            \delta^{d(1-\beta)} &\quad \text{if } \beta < 1,\\
            \log N + |\log \delta| & \quad \text{if } \beta = 1, \\
            N^{(-2/d - \eps)d(1-\beta)}& \quad \text{if } \beta > 1.
        \end{cases}
    \end{align}
    Thus, the second right-hand side term in \eqref{est:three_particle2.1} is estimated by the second right-hand side term in \eqref{est:three_particle2}.
\end{proof}

\begin{lem}\label{lem:number_close_pairs}
    There exists $C > 0$ depending only on $\rho$ such that the following holds for all $\theta\in (0,1)$ and all $\delta > 0$ such that $C\delta^d<\theta$.
    \begin{align}
        \P(\#\{ i: d_{ii_{nn}} \leq \delta N^{-1/d}\} \geq 2\theta N   ) \leq 2\left(\frac 1{1-\theta}\left( \frac {C(1-\theta) \delta^d}\theta \right)^{\theta}\right)^N.
    \end{align}
\end{lem}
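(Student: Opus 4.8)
The plan is to bound the probability by a union bound over all subsets of size roughly $2\theta N$ of particles that could have close nearest neighbours, and then to estimate the probability that every particle in such a subset has another particle within distance $\delta N^{-1/d}$. First I would fix a set $I\subseteq\{1,\dots,N\}$ of cardinality $m:=\lceil 2\theta N\rceil$ and estimate $\P(\forall i\in I:\ d_{ii_{nn}}\le \delta N^{-1/d})$. The point is that $d_{ii_{nn}}\le \delta N^{-1/d}$ means there exists $j\ne i$ with $|X_i-X_j|\le \delta N^{-1/d}$; restricting to $j\in I$ is not valid in general, so instead I would pair up the indices of $I$: think of the event as saying that the points $\{X_i\}_{i\in I}$, together with at most $N-m$ further points, admit no isolated point at scale $\delta N^{-1/d}$. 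A cleaner route is to group $I$ into $\lfloor m/2\rfloor$ disjoint pairs $\{i_1,i_2\}$ and note that for the event to hold it is \emph{not} enough that each pair is close; one really does need the full graph structure. The standard trick (as in \cite[Proposition A.3]{Hauray09} / Lemma \ref{lem:prob.distances}) is: if every $i\in I$ has a neighbour within $r:=\delta N^{-1/d}$, then one can extract a sub-collection of at least $m/2$ points which are contained in $\lceil m/2\rceil$ balls of radius $r$ in such a way that the conditional probability factorises. Concretely, order the indices of $I$ and greedily: conditionally on $X_{i_1},\dots,X_{i_{k-1}}$, the probability that $X_{i_k}$ lies within $2r$ of one of the previously placed points is at most $(k-1)\sup_x\rho(B_{2r}(x))\lesssim (k-1)\, r^d\|\rho\|_\infty$.

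Carrying this out: I would enumerate $I=\{i_1<i_2<\dots<i_m\}$ and define $A_k$ to be the event $d_{i_k,i_{k,\nn}}\le r$ using only the particles $i_1,\dots,i_k$ — but since the nearest neighbour may be \emph{outside} this set, the correct statement is that for \emph{every} realisation in the event, a constant fraction of the points of $I$ can be listed so that each is within $2r$ of an earlier one. This gives, after conditioning,
\begin{align}
    \P(\#\{i:d_{ii_\nn}\le r\}\ge m)\le \binom{N}{m}\prod_{k=1}^{\lfloor m/2\rfloor}\big(C(N-k)\, r^d\big)
    \le \binom{N}{m}\,\big(CN r^d\big)^{\lfloor m/2\rfloor},
\end{align}
where I absorbed $\|\rho\|_\infty$ and the ball volume into $C$. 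With $r=\delta N^{-1/d}$ we get $CNr^d=C\delta^d$. Then I would use the elementary bound $\binom{N}{m}\le \big(\tfrac{eN}{m}\big)^m$ or, more sharply, $\binom{N}{m}\le \big(\tfrac1{1-\theta}\big)^{N}\big(\tfrac{1-\theta}{\theta}\big)^{m}\cdot(\text{const})$ coming from the entropy estimate $\binom{N}{\theta N}\approx e^{N H(\theta)}$ with $H(\theta)=-\theta\log\theta-(1-\theta)\log(1-\theta)$. Combining, with $m\approx 2\theta N$ so that $\lfloor m/2\rfloor\approx \theta N$,
\begin{align}
    \P(\cdots)\le \left(\frac{1}{1-\theta}\right)^{N}\left(\frac{1-\theta}{\theta}\right)^{2\theta N}\big(C\delta^d\big)^{\theta N}
    = \left(\frac{1}{1-\theta}\left(\frac{(1-\theta)^2 C\delta^d}{\theta^2}\right)^{\theta}\right)^{N},
\end{align}
and after adjusting constants and the floor corrections (which cost at most a factor of $2$) this matches the claimed bound $2\big(\tfrac1{1-\theta}(\tfrac{C(1-\theta)\delta^d}{\theta})^\theta\big)^N$; the hypothesis $C\delta^d<\theta$ guarantees the base is $<1$.

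The main obstacle I anticipate is handling correctly the fact that the nearest neighbour of $i\in I$ need not lie in $I$ — one must argue that the event "$\ge 2\theta N$ particles have a neighbour within $r$" still forces a \emph{chain} of $\ge\theta N$ conditionally-dependent close encounters, so that the product of conditional probabilities telescopes. The clean way is: order \emph{all} $N$ particles arbitrarily; for each particle that has a neighbour within $r$, that neighbour is either earlier or later in the order, and at least half of the $2\theta N$ particles in question are "closed from the left'' (their witnessing neighbour precedes them), giving $\ge\theta N$ indices each of which, conditioned on all earlier particles, falls within $r$ of one of $\le N$ fixed points — probability $\le CNr^d$ each, independently by the tower property. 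Then the combinatorial factor is just the choice of which $\ge\theta N$ indices, i.e. $\binom{N}{\theta N}$, and the computation above goes through. The rest is the standard Stirling/entropy estimate for the binomial coefficient, which is routine.
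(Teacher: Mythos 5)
Your argument is correct and reaches the paper's bound by a genuinely different route at the key step, even though the first half of your write-up is muddled and is superseded by your own final paragraph. The initial reduction is the same as the paper's: introduce $Z_i=\1\{\exists j<i: d_{ij}\le \delta N^{-1/d}\}$ and its mirror $\tilde Z_i$, note that $\#\{i: d_{ii_{nn}}\le\delta N^{-1/d}\}\le\sum_i Z_i+\sum_i \tilde Z_i$, and reduce to $\P(S_N\ge\theta N)$ with $S_N=\sum_i Z_i$ by pigeonhole, exchangeability, and a union bound, which is where the prefactor $2$ comes from. State this precisely: ``at least half are closed from the left'' is not automatic for a given configuration; what is true is that at least $\theta N$ are left-closed \emph{or} at least $\theta N$ are right-closed. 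For the core estimate the paper uses a Chernoff bound: from $\P(Z_{i+1}=1\mid\sigma_i)\le q:=C\delta^d$ and the tower property it deduces $\E(e^{\lambda S_N})\le(qe^\lambda+1-q)^N$, then applies Markov and optimizes over $\lambda$. You instead union-bound over subsets $J$ with $|J|=\lceil\theta N\rceil$, derive $\P(\forall j\in J: Z_j=1)\le q^{|J|}$ from the same conditional estimate and tower property, and bound $\binom N{\theta N}\le\theta^{-\theta N}(1-\theta)^{-(1-\theta)N}$. The two routes yield the identical final expression (the entropy bound is precisely what the Chernoff rate becomes after the paper drops the extra term $(1-\theta)\log\frac{1-q}{1-\theta}$); the Chernoff route is a little cleaner since it avoids floor/ceiling bookkeeping, while yours is more elementary. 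One side remark to delete: $C\delta^d<\theta$ does not by itself make the base $<1$ (indeed at $C\delta^d=\theta$ the base is $(1-\theta)^{\theta-1}>1$); that hypothesis merely ensures $q<\theta<1$, which is what both arguments actually need.
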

\begin{proof}
    We consider
    \begin{align}
        Z_i &:= \begin{cases}
            1 & \quad \text{if } \exists j < i : d_{ij} \leq \delta N^{-1/d}, \\
            0 & \quad \text{otherwise },
        \end{cases} \\
        S_N &:= \sum_{i=1}^N Z_i.
    \end{align}
    Moreover, let $\tilde Z_i, \tilde S_N$ be defined in the same way but with $i < j$ instead of $j < i$ in the definition of $\tilde Z_i$.
    Then, we have
    \begin{align}
        \P(\#\{ i: d_{ii_{nn}} \leq \delta N^{-1/d}\} \geq 2 N \theta  ) 
        &\leq \P\left(\frac 1 N (S_N + \tilde S_N) \geq 2 \theta  \right) \\
        & \leq \P\left(\frac 1 N S_N  \geq  \theta  \right) + \P\left(\frac 1 N  \tilde S_N \geq \theta  \right) = 2  \P\left(\frac 1 N S_N  \geq  \theta  \right), 
    \end{align}
  where we used in the last line that $S_N$ and $\tilde S_N$ are identically distributed.
The right-hand side can be estimated similarly to Cram\'er's Theorem for large deviations.  
    The random variables $Z_i$ are not i.i.d.
    However, denoting by $\sigma_k$ the sigma algebra generated by $(X_1, \dots, X_k )$, we have for all $\lambda \in \R$ 
    \begin{align}
        \E \left( \prod_{i=1}^N e^{\lambda Z_i} \right) &= \E(e^{\lambda Z_1} \E(e^{\lambda Z_2} \dots \E(e^{\lambda Z_{N-1}} \E(e^{\lambda Z_N} | \sigma_{N-1}) | \sigma_{N-2}) \dots |\sigma_2)| \sigma_{1})), 
    \end{align}
    and since 
    \begin{align}
        \P(Z_{i+1} = 1 | \sigma_i) \lesssim i (\delta N^{-1/d})^d \lesssim \delta^d,
    \end{align}
    we find with $q = C \delta^d$ where $C$ is the implicit constant above
    \begin{align}
          \E(e^{\lambda Z_{i+1}} | \sigma_i) & \leq q e^\lambda + (1-q).
    \end{align}
    Hence 
    \begin{align}
      \E(e^{\lambda S_N}) =  \E \left( \prod_{i=1}^N e^{\lambda Z_i} \right) \leq (q e^\lambda + (1-q))^N.
    \end{align} 
    We estimate by Markov's inequality
    \begin{align}
        \P\left(\frac 1 N S_N \geq \theta\right) \leq  \P\left(e^{\lambda S_N} \geq  e^{\lambda N\theta} \right) \leq e^{-\lambda N \theta} \E(e^{\lambda S_N}).
    \end{align}
    Thus, we get for all $\lambda \in \R$ 
    \begin{align}
      \frac 1 N \log  \P\left(\frac 1 N S_N \geq \theta\right)  \leq -\lambda \theta +  \log(q e^\lambda + (1-q)).
    \end{align}   
    Optimising in $\lambda$, i.e. choosing $\lambda = \log(\theta/q) - \log((1-\theta)/(1-q))$ yields
    \begin{align}
       \frac 1 N \log  \P\left(\frac 1 N S_N \geq \theta\right) \leq - \theta \log \left( \frac \theta q \right) - (1-\theta) \log \left(\frac {1-\theta}{1-q} \right).
    \end{align}
    In particular, using $q < \theta < 1 $,
    \begin{align}
         \P\left(\frac 1 N S_N \geq \theta\right) \leq \left(\frac 1{1-\theta}\left( \frac {C(1-\theta) \delta^d}\theta \right)^{\theta}\right)^N.
    \end{align}
    
\end{proof}

\subsection{Proof of Theorem~\ref{th:main}}
% \begin{thm} \label{th:deterministic}
%     There exists  $\eps_0>0$ depending only on $\alpha$ with the following property. Let $T>0$, $\rho_0\in \mP(\R^d)\cap L^\infty(\R^d)$, $p \in [1,\infty)$. For $N \in \N$ consider initial particle positions $X_i^0$, $1 \leq i \leq N$ such that
%     \begin{align}
%         \dmin(0) \geq N^{-2/d - \eps_0}, \label{ass:dmin} \\
%         \dminone \geq N^{-3/(2d) - \eps_0}, \label{ass:dminone}, \\
%         \#\{X_i^0 : 1 \leq i \leq N, d_{i,i_{nn}} < N^{-3/(2d) - \eps_0} \} \leq N^{-\frac p d}, \label{ass:number.bad} \\
%        \forall i \neq j \neq k \neq i : d_{ij} d_{ik} \leq N^{-\frac 3 d - \eps_0}) \label{ass:dminone.strong}\\
%         \W_p(\rho_N^0,\rho_0) \leq N^{-\frac{\alpha(2d\eps_0+1)(d+p)}{(2d(d-\alpha)p)}}. \label{ass:W_p} 
%     \end{align}
%     for all $N \in \N$.
%     Let $\rho(t),\rho_N(t)$ be the solutions of the corresponding equations \eqref{eq:macro} and \eqref{eq:micro} respectively. Then, there exist $N_0,C>0$ only depending on $T,d,q,\rho_0$ such that for all $N \geq N_0$ 
%     \begin{align}
%         \sup_{t\le T}\W_p(\rho_N(t),\rho(t))\le CW_p(\rho_N^0,\rho^0).
%     \end{align}
% \end{thm}

Theorem \ref{th:main} is an immediate consequence of Theorem~\ref{th:deterministic.new} and the following probabilistic result.

\begin{prop} \label{pro:assumptions.prob}
     Let $d \geq 2$ and let $\rho^0\in \mP(\R^d)\cap L^\infty(\R^d)$ be the pushforward by a Lipschitz map of the uniform  distribution on the cube $\1_Q$.  Let the family $\rho_N^0$ be empirical measures of $N$ i.i.d. particles  with law $\rho^0$.
    Then, for $0<\alpha<1/3$ if $d=2$, and $0<\alpha<\frac {d-1}2$ if $d\ge 3$, and $p \in [1,\infty]$ such that
    % \begin{align*}
    %     p>\max\left(\frac{d(\alpha+1)}{2d-3(\alpha+1)},\frac{d(d-2\alpha-2)}{4\alpha^2+\alpha(-6d+3)+2d^2-2d-1}\right),
    % \end{align*}
        \begin{align*}
        p>\frac{d(\alpha+1)}{2d-3(\alpha+1)},
    \end{align*}
    there exists $\delta_N>0$ such that, with overwhelming probability, $\delta_N\le \dminone(0)$, as well as assumptions \eqref{ass:conv}--\eqref{ass:dminone.strong.2} are satisfied, and
    \begin{align} \label{absorbable} 
    N^{-1}(\rho_N^0(D_{\delta_N}))^{1/p} \dmin(0)^{-\alpha} \leq \W_p(\rho_N^0,\rho^0).
    \end{align}
    More precisely, let $A$ be the set of particle configurations for which $\delta_N\le \dminone(0)$, \eqref{ass:conv}--\eqref{ass:dminone.strong.2}, and \eqref{absorbable} hold. Then $\lim_{N \to \infty} \P(A) = 1$.
\end{prop}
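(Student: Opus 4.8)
The plan is to take $\delta_N := N^{-\gamma}$ for a suitably chosen exponent $\gamma$ and then verify each of the finitely many properties defining $A$ separately, concluding by a union bound on the complements. The correct range for $\gamma$ turns out to be
\begin{align*}
    \frac{3}{2d} < \gamma < \frac1d + \frac{(d-\alpha-1)p}{d(\alpha+1)(d+p)},
\end{align*}
and a short computation (cross-multiplying) shows this interval is nonempty precisely when $2d-3(\alpha+1)>0$ and $p>\frac{d(\alpha+1)}{2d-3(\alpha+1)}$, i.e.\ exactly under the hypotheses; for $d=2$ this already forces $\alpha<1/3$, and together with $\alpha<\tfrac{d-1}{2}$ (needed below) this reproduces the stated admissible range of $\alpha$. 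I fix such a $\gamma$, a small auxiliary $\eps>0$, and a slowly growing $\omega_N = N^\kappa\to\infty$ with $0<\kappa<2\gamma-3/d$.

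Most conditions follow quickly from the preliminary lemmas. Property \eqref{ass:conv}, together with the upper bound $\W_p(\rho_N^0,\rho^0)\le CN^{-1/d}\log N$ with overwhelming probability, is Lemma~\ref{lem:Wass_scaling}; combined with the deterministic quantization lower bound $\W_p(\rho_N^0,\rho^0)\gtrsim N^{-1/d}$ (cf.\ Remark~\ref{rem:det}) this controls $\W_p$ up to logarithms. Since $\gamma>3/(2d)$, Lemma~\ref{lem:three_particle1} gives $\delta_N\ll N^{-3/(2d)}\lesssim\dminone(0)$ with overwhelming probability. For \eqref{ass:dminone.strong.1} I apply Lemma~\ref{lem:three_particle1} with $L_1=\delta_N$ and $L_2=\omega_N\delta_N$: the failure probability is $\lesssim N^3\omega_N^d\delta_N^{2d}=N^{3+d\kappa-2d\gamma}\to0$. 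For \eqref{absorbable} I bound $\rho_N^0(D_{\delta_N})\le1$ (Lemma~\ref{lem:number_close_pairs} in fact gives $\rho_N^0(D_{\delta_N})\to0$, which is not needed here), use $\dmin(0)\gtrsim N^{-2/d}$ (Lemma~\ref{lem:prob.distances}) and $\W_p\gtrsim N^{-1/d}$, reducing the claim to $N^{-1}\dmin(0)^{-\alpha}\ll N^{-1/d}$, i.e.\ to $\alpha<\tfrac{d-1}{2}$, which holds. Once \eqref{absorbable} is known, the bracket in \eqref{ass:W_p} is $\lesssim\W_p^p$, so \eqref{ass:W_p} reduces to $\W_p^{(d-\alpha-1)p/(d+p)}\big/\big(N^{(\alpha+1)/d}\delta_N^{\alpha+1}\big)\to0$, which is exactly the content of the upper bound on $\gamma$ (the logarithm from $\W_p$ is absorbed by the strict inequality of exponents).

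The delicate condition is \eqref{ass:dminone.strong.2}, which I would obtain from Lemma~\ref{lem:three_particle2} applied with $\beta=\alpha$ and $\delta=\delta_N$, reading "$\ll 1$" as "$\le N^{-\eps}$". The first error term $N^{-d\eps}\to0$ is harmless, and the second splits into three cases. If $\alpha<1$ it equals $N^{3-d(1-\eps)}\delta_N^{d(1-\alpha)}=N^{3-d+d\eps-\gamma d(1-\alpha)}$; its exponent is negative for $\eps$ small because $\gamma>\tfrac{3}{2d}$ and $\tfrac{3}{2d}\ge\tfrac{3-d}{d(1-\alpha)}$, the latter being equivalent to $\alpha\le\tfrac{2d-3}{3}$ --- and in $d=2$ this is precisely the mechanism producing the threshold $\alpha<1/3$. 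If $\alpha=1$ (possible only for $d\ge4$) the term $N^{3-d(1-\eps)}(\log N+|\log\delta_N|)\to0$ since $3-d<0$. If $\alpha>1$ (again $d\ge4$) the term is $N^{3-d(1-\eps)}N^{(-2/d-\eps)d(1-\alpha)}$, whose exponent works out to $1-d+2\alpha+O(\eps)$, negative exactly when $\alpha<\tfrac{d-1}{2}$ --- the second and last place this threshold is used. I expect this case analysis in \eqref{ass:dminone.strong.2}, and the bookkeeping of which inequality on $(\alpha,d,p)$ each constraint on $\gamma$ produces, to be the main obstacle; all the genuinely probabilistic input is supplied by Lemmas~\ref{lem:Wass_scaling}--\ref{lem:number_close_pairs}.
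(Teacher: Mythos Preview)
Your proof is correct and follows essentially the same strategy as the paper: choose $\delta_N=N^{-\gamma}$ with $\gamma$ slightly larger than $3/(2d)$, then verify each condition via Lemmas~\ref{lem:Wass_scaling}--\ref{lem:three_particle2} and the quantization lower bound from Remark~\ref{rem:det}\ref{it:Wass_scaling}. The paper simply fixes $\gamma=3/(2d)+\eps$, while you track the admissible interval for $\gamma$ explicitly, which makes the origin of the constraint on $p$ a bit more transparent; the case analysis for \eqref{ass:dminone.strong.2} via Lemma~\ref{lem:three_particle2} is identical.

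The one substantive difference is your treatment of \eqref{absorbable}: you use the trivial bound $\rho_N^0(D_{\delta_N})\le 1$, whereas the paper invokes Lemma~\ref{lem:number_close_pairs} to obtain $\rho_N^0(D_{\delta_N})\lesssim N^{-1/2-\eps d}$, gaining an extra factor $N^{-1/(2p)}$. Your simpler route still works because the strict inequality $\alpha<(d-1)/2$ already yields $N^{-1}\dmin(0)^{-\alpha}\lesssim N^{-1+2\alpha/d+o(1)}\ll N^{-1/d}\lesssim\W_p$; the additional $N^{-1/(2p)}$ that the paper secures is not needed for the stated range of $\alpha$. So your argument is a genuine (if minor) simplification, at the cost of making Lemma~\ref{lem:number_close_pairs} superfluous for this proposition.
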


\begin{proof}
    The appropriate choice is $\delta_N=N^{-3/(2d)-\eps}$ for $\eps$ small enough.  
    We have to check that  each of the conditions \eqref{ass:conv}--\eqref{ass:dminone.strong.2} is satisfied with overwhelming probability in the limit $N\to \infty$. Lemma~\ref{lem:Wass_scaling} yields \eqref{ass:conv} since most configurations satisfy $\W_p(\rho,\rho_N)\le CN^{-1/d}\log N$. Lemma~\ref{lem:three_particle1} assures $\delta_N\le \dminone(0)$ and  \eqref{ass:dminone.strong.1} for most configurations. Assumption \eqref{ass:dminone.strong.2} is satisfied due to Lemma \ref{lem:three_particle2}: We choose $\delta=\delta_N$. Then, for $d=2$, $\alpha<1/3$ and for $d=3$, $\alpha < 1$ such that the total power of $N$ on the right-hand side of \eqref{est:three_particle2} is always negative for $\eps$ small enough. For $d\ge 4$ the power before the bracket on the right-hand side of \eqref{est:three_particle2}  is negative and majorizes possible positive powers for $\alpha\ge 1$ precisely if $0<\alpha<\frac {d-1}2$. \\
    Finally, comparing the two terms appearing in \eqref{ass:W_p}, inequality \eqref{absorbable}, which we prove below, implies that it is enough to check the first term. For this, it is enough to use again Lemma~\ref{lem:Wass_scaling} to see the scaling of the Wasserstein distance as $N^{-1/d}\log N$. Using $\alpha<1/3$ for $d=2$ and $\alpha < \frac {d-1}2 \leq \frac 2 3 d -1$ for $d\ge 3$, this determines the bound $p>\frac{d(\alpha+1)}{2d-3(\alpha+1)}$ for the first term to go to zero.  
    % This yields the condition
    % \begin{align*}
    %     N^{-(\alpha+1)/d}N^{3(\alpha+1)/(2d) +(\alpha+1) \eps}\bra{N^{-p}N^{-1/2}N^{2\alpha p /d + \alpha p \eps} }^{(d-\alpha-1)/(d+p)}\to 0
    % \end{align*}
    % % In the limit $p\to \infty$ this amounts to
    % % \begin{align*}
    % %     (\alpha+1)\frac{1}{2d}+(d-\alpha+1)+\frac 2d\alpha(d-\alpha-1)<0
    % % \end{align*}
    % % which is easily verified to be always true, so that the term approaches zero for large $p$. \\
    % We obtain the condition for $p$
    % \begin{align*}
    %     \frac 1{2d}(\alpha+1)+\frac{d-\alpha-1}{d+p}\left(-p-\frac 12+\frac{2p\alpha}d\right)<0,
    % \end{align*}
    % which translates to
    % \begin{align*}
    %     p>\frac{d(d-2\alpha-2)}{4\alpha^2+\alpha(-6d+3)+2d^2-2d-1}.
    % \end{align*}
    % For $0< \alpha < \frac {d-1}2$, a tedious but elementary calculation shows that this condition is satisfied for all $p \geq 1$.\\
    Finally, regarding condition \eqref{absorbable}, we use Lemma~\ref{lem:prob.distances} with $L = N^{\eps}$ and Lemma~\ref{lem:number_close_pairs} with $\delta=N^{-1/(2d)-\eps}$ and $\theta=2C N^{-1/2-\eps d}$ to estimate $\rho_N^0(D_{\delta_N})$ and $ \dmin(0)$, yielding 
\begin{align*}
    N^{-1}(\rho_N^0(D_{\delta_N}))^{1/p} \dmin(0)^{-\alpha} \leq N^{\frac{-2p(d-2\alpha)-d}{2dp} + \alpha \eps}
\end{align*}
with overwhelming probability. 
In view of Remark \ref{rem:det} \ref{it:Wass_scaling}, it is enough to check 
$\frac{-2p(d-2\alpha)-d}{2dp} < -\frac 1 d$ which is always true for $ \alpha < (d-1)/2$.
\end{proof}

\section*{Acknowledgements}

R.S. has been supported by the Deutsche Forschungsgemeinschaft (DFG, German Research Foundation) through the research training group ''Energy,
Entropy, and Dissipative Dynamics (EDDy)`` (Project-ID 320021702 /GRK2326) and the collaborative research
centre ‘The mathematics of emergent effects’ (CRC 1060, Project-ID 211504053).
R.H. thanks Barbara
Niethammer,  Juan Vel'azquez and the Hausdorff Center for Mathematics for the hospitality during the stays in
Bonn.

\appendix

 \begin{refcontext}[sorting=nyt]
\printbibliography
 \end{refcontext}
\end{document}